\documentclass[11pt, a4paper]{article}
\usepackage{amsfonts, amsmath, amssymb, amsthm, longtable, url, enumitem}

\newcommand\al{\alpha}
\newcommand\lm{\lambda}
\newcommand\sg{\sigma}

\newcommand\cF{\mathcal{F}}

\theoremstyle{plain}
\newtheorem{theorem}{Theorem}[section]

\newtheorem{corollary}[theorem]{Corollary}
\newtheorem{lemma}[theorem]{Lemma}
\newtheorem{proposition}[theorem]{Proposition}
\newtheorem*{claim*}{Claim}
\newtheorem*{problem*}{Problem}
\newtheorem*{conjecture*}{Conjecture}

\theoremstyle{definition}
\newtheorem{definition}[theorem]{Definition}

\newcommand\la{\langle}
\newcommand\ra{\rangle}
\newcommand\lla{\langle\!\langle}
\newcommand\rra{\rangle\!\rangle}

\newcommand\ad{\mathrm{ad}}

\newcommand{\gr}{\mathrm{gr}}

\newcommand\Z{\mathbb{Z}}

\newcommand\F{\mathbb{F}}

\newcommand\Aut{\mathrm{Aut}}
\newcommand\Out{\mathrm{Out}}

\newcommand{\A}{\mathrm{A}}
\newcommand{\B}{\mathrm{B}}
\newcommand{\C}{\mathrm{C}}
\renewcommand{\L}{\mathrm{L}}

\renewcommand{\phi}{\varphi}
\renewcommand{\epsilon}{\varepsilon}

\usepackage{ltablex} 
\usepackage{tabularx}
\keepXColumns

\setlist[enumerate,1]{label={\upshape\arabic*.}}
\setlist[enumerate,2]{label={\textup{(}\alph*\textup{)}}}

\newcolumntype{C}[1]{>{\centering\arraybackslash}m{#1}}
\newcolumntype{Y}{>{\centering\arraybackslash}X}

\title{Enumerating $3$-generated axial algebras of Monster type}
\author{S.M.S.~Khasraw\footnote{Department of Mathematics, College of Education, Salahaddin University-Erbil, Erbil, Kurdistan Region, Iraq, email: sanhan.khasraw@su.edu.krd}
\and
 J.~M\textsuperscript{c}Inroy\footnote{School of Mathematics, University of Bristol, Fry Building, Woodland Road, Bristol, BS8 1UG, UK, and the Heilbronn Institute for Mathematical Research, Bristol, UK, email: justin.mcinroy@bristol.ac.uk}
 \and
 S.~Shpectorov\footnote{School of Mathematics, University of Birmingham, Edgbaston, Birmingham, B15 2TT, UK, email: S.Shpectorov@bham.ac.uk}}

\date{}

\begin{document}
\maketitle

\begin{abstract}
An axial algebra is a commutative non-associative algebra generated by axes, 
that is, primitive, semisimple idempotents whose eigenvectors multiply 
according to a certain fusion law.  The Griess algebra, whose automorphism 
group is the Monster, is an example of an axial algebra.  We say an axial 
algebra is of \emph{Monster type} if it has the same fusion law as the 
Griess algebra.

The $2$-generated axial algebras of Monster type, called Norton-Sakuma algebras, have been fully classified and are one of nine isomorphism types.  In this paper, 
we enumerate a subclass of $3$-generated axial algebras of Monster type in terms of their groups and shapes.  It turns out that the vast majority of the possible shapes for such algebras collapse; that is they do not lead to non-trivial examples.  This is in sharp contrast to previous thinking.  Accordingly, we develop a method of minimal forbidden configurations, to allow us to efficiently recognise and eliminate collapsing shapes.
\end{abstract}

\section{Introduction}

The paradigm of axial algebras was introduced by Hall, Rehren and Shpectorov in \cite{Axial1, Axial2} and is a different way of describing classes of non-associative algebras.  Namely, the algebras are axiomatised in terms of special idempotent elements, called axes, whose adjoint action is described by a specific fusion law.  It turns out that such objects occur in different parts of mathematics and even beyond.  The origin of this approach goes back to quantum physics, where the axial properties are exhibited within vertex operator algebras.  Among the other connections, Tkachev \cite{Tkachev} noticed that Hsiang algebras arising in the theory of non-linear PDEs are axial algebras and he determined their fusion law.  Recently, a reference to axial algebras appeared in a paper \cite{Fox} studying algebras of vector fields on manifolds, such as the algebra of Ricci flows.

The notable examples of axial algebras include the Griess algebra and also a majority of simple Jordan algebras.  All these are contained in the class of axial algebras of Monster type $(\alpha, \beta)$.  The Griess algebra arises for $(\alpha, \beta) = (\frac{1}{4}, \frac{1}{32})$ and this is, in fact, the most exciting case, where many interesting examples occur.  In the remainder of the paper, where we talk about algebras of Monster type, we mean Monster type $(\frac{1}{4}, \frac{1}{32})$.

The fusion law which axiomatises the class of algebras of Monster type assures that for every axis $a$ there is an associated automorphism $\tau_a$ of the algebra, called the \emph{Miyamoto involution}.  The group generated by all Miyamoto involutions is known as the \emph{Miyamoto group} of the algebra.  For the Griess algebra, the Miyamoto group is the Monster sporadic simple group and this example was the one that motivated the whole theory of axial algebras, indirectly via the theory of Majorana algebras \cite{Majoranabook, majsurvey}.

Apart from Sakuma's theorem \cite{IPSS, Axial1}, which classifies $2$-generated algebras of Monster type, most other papers study algebras for specific Miyamoto groups.  We want to be more systematic and enumerate a class of algebras defined by some natural conditions.  The theory is still in its infancy and so we cannot be too ambitious, but it looks natural to focus on the next case after Sakuma's theorem, namely the class of $3$-generated axial algebras of Monster type.  However, even for this class there are significant obstacles.

\bigskip\noindent{\bf Transposition groups and algebras.} The conditions that we impose on the algebra should be of the sort that can translate to group theoretic conditions, allowing us to find the Miyamoto groups as a first step.  It follows from Sakuma's theorem, that every Miyamoto group coming from an algebra of Monster type is a $6$-transposition group (see Corollary \ref{6trans}).  That is, the conjugacy class of Miyamoto involutions satisfies the property that the order of $xy$ is at most $6$ for any two Miyamoto involutions $x$ and $y$.  (Indeed, it is very well-known that the Monster is a $6$-transposition group.)  It would be very interesting to find the complete list of all $3$-generated $6$-transposition groups.  In fact, Bernd Fischer at conferences often stressed this as a key problem.  However, the complete list would involve the Monster as well as a large number of its subgroups, so this is a very difficult problem and is likely to be out of reach at present.  On the other hand, the class of $3$-transposition groups is very well understood and we have a complete list of $3$-generated $3$-transposition groups.  In this paper, we look at the class of $3$-generated $4$-algebras, that is the algebras of Monster type with no subalgebras of type $5\A$, or $6\A$.  This corresponds to their Miyamoto group being a $4$-transposition group.  By setting the limit at $4$, we also avoid another severe difficulty, namely the group $B(2,5){:}2$, which is a $3$-generated $5$-transposition group.  This is currently an open case of the Burnside problem, i.e.\ it is not known whether or not $B(2,5)$ is finite.

This project, finding $3$-generated $4$-algebras, was attempted in \cite{sanhan} by the first author.  This was incomplete in a number of ways, particularly because only a much weaker GAP realisation of the expansion algorithm \cite{axialconstruction} was available at the time.  However, even in that preliminary attempt, one could observe the main phenomenon that we emphasise here.  That the great majority of all shapes (configurations of $2$-generated subalgebras, as introduced in Section \ref{sec:shapes}) collapse, i.e.\ they do not lead to any non-trivial algebras.  This is in sharp contrast with the earlier intuition, where we would typically expect exactly one example in each case.  This can be explained partly as the early attempts dealt with simple groups, whereas in the current project most groups are soluble.  Even taking this into account, we were not prepared for the outcome.  Over $99\%$ of all cases (all but at most $101$ out of $11,257$) collapsed in our systematic search.  This shifts the focus of the theory from just building examples to the following questions.

\begin{problem*}
\begin{enumerate}
\item What theoretical conditions can we impose on the shapes to ensure that they lead to non-trivial algebras?
\item Dually, without any extra conditions, how can we practically eliminate the majority of collapsing shapes?
\end{enumerate}
\end{problem*}

For the first question, we do not have any good suggestions at present and we note that the extra assumptions that have been used in Majorana theory are not entirely satisfactory as they eliminate good shapes, leading to valid algebras, as well as the bad ones.  

For the dual question, we argue in this paper that we should study minimal collapsing configurations.   We find experimentally $18$ such forbidden configurations, see Table \ref{tab:min0dims},  which eliminate almost all of our bad shapes.  The idea here is simple, if we have a small collapsing configuration which we can find as a subshape inside a larger shape, then this larger shape must also collapse.  Finding the subshape is much cheaper as it involves very simple group theory and combinatorics, whereas collapsing the shape by expansions involves finding exact solutions to huge systems of linear equations.

Using this idea and the $18$ forbidden configurations eliminates all but $123$ shapes.  On these we run the {\sc magma} implementation of the expansion algorithm which collapses a further $22$ shapes and we show that $45$ more complete to give non-trivial algebras.  The remaining $56$ shapes could not be completed by the algorithm.  We do not think that a simple improvement in performance of our algorithm can resolve these cases.  In particular, for at least some of the shapes, there are infinitely many different algebras that arise and so they cannot be computed by the present algorithm.  For example, the group $G=2^2$ on $2+2+2$ axes of shape $4\A$ was analysed by Whybrow \cite{maddyinfinite} and she showed that there are infinitely many algebras in this case.  Peacock and the second author \cite{axialvarieties} analysed the group $G=2^2$ on $2+2+2$ axes of shape $4\A(2\A)^2$ and while the calculation wasn't completed in all cases, it appears that there is more than one algebra in this case also.  So we expect that many of the remaining cases truly require new ideas, regardless of the size of the group.

Some of the completed algebras have appeared elsewhere before.  For example, four of the $S_4$ algebras first appeared in \cite{IPSS}, two of the $PSL_2(7)$ cases were done by hand in \cite{L32}, and the remainder of the shapes for these two groups appeared in \cite{axialconstruction}.  Also a couple of the results for soluble groups appeared there too, but as examples of algebras which are not $2$-closed and they actually originated in this project.  There is also some intersection with the tables from \cite{min3gen}.  There they investigate minimal $3$-generated algebras of Monster type.  Even though they allow $k=5,6$, their assumptions only lead to small Miyamoto groups.  Furthermore, the minimality assumption rules out many possible actions on axes and many shapes.  The second author \cite{3gen}, only assuming minimality of the group rather than the algebra, encountered a total of just $161$ shapes.

As we already mentioned, the thesis \cite{sanhan} was the first attempt at the classification of $3$-generated $4$-algebras.  However, there are significant differences.  The larger $2$-groups were not attempted.  More importantly, the approach taken in \cite{sanhan} followed the idea of Majorana representations of Ivanov, whereby the action on axes comes from the action on involutions in a central cover of the Miyamoto group.  On the one hand, this eliminates some of the actions, but on the other, this also causes some algebras to appear twice because the same action may appear in different covering groups.  Still, having a much weaker algorithmic support, \cite{sanhan} managed to find quite a few algebras and, most importantly, first noticed the collapsing phenomenon.  It also contained handmade proofs both for some algebras which exist and also for others which collapse.

The structure of the paper is as follows.  In Section \ref{sec:background} we give brief details about axial algebras and the shape of an algebra.  The group theory part of the project, finding $3$-generated $4$-transposition groups up to similarity, is completed in Section \ref{sec:groups}.  In Section \ref{sec:axes}, we develop general techniques for determining the possible actions on axes and we apply this to our case.  We compute all possible shapes in Section \ref{sec:forbidden} and face the problem that most of these need to be eliminated without running the expansion algorithm.  Hence we introduce the idea of forbidden configurations and give our list of $18$.  We also provide handmade proofs for two of these minimal collapsing shapes, which between them eliminate over half of our $11,257$ shapes.  Finally, in Section \ref{sec:results}, we run the algorithm on the shapes which cannot be eliminated via forbidden configurations.  We include two tables, one summarising the number of collapsing shapes per action and another listing the non-collapsing shapes individually.


\section{Background}\label{sec:background}

We will review the definition and some properties of axial algebras which were 
first introduced by Hall, Rehren and Shpectorov in \cite{Axial1, Axial2} (see also \cite{DPSV} for a more modern version).

\begin{definition}
A \emph{fusion law} is a pair $\cF := (\cF, \star)$ consisting of a non-empty set 
$\cF$ and a binary operation $\star \colon \cF \times \cF \to 2^\cF$.
\end{definition}

Here $2^\cF$ denotes the set of all subsets of $\cF$.

The following Table \ref{tab:monsterfusion} is an example of a fusion law and in 
fact this is the fusion law we will be interesting in in this paper.  In the $\lm, 
\mu$ entry of the table, we list the elements of the set $\lm \star \mu$.  In 
particular, if the entry is empty, then $\lm \star \mu = \emptyset$.

\begin{table}[!htb]
\setlength{\tabcolsep}{4pt}
\renewcommand{\arraystretch}{1.5}
\centering
\begin{tabular}{c||c|c|c|c}
 & $1$ & $0$ & $\frac{1}{4}$ & $\frac{1}{32}$ \\ \hline \hline
$1$ & $1$ &  & $\frac{1}{4}$ & $\frac{1}{32}$ \\ \hline
$0$ &  & $0$ &$\frac{1}{4}$ & $\frac{1}{32}$ \\ \hline
$\frac{1}{4}$ & $\frac{1}{4}$ & $\frac{1}{4}$ & $1, 0$ & $\frac{1}{32}$ \\ \hline
$\frac{1}{32}$ & $\frac{1}{32}$  & $\frac{1}{32}$ & $\frac{1}{32}$ & $1, 0, \frac{1}{4}$ 
\end{tabular}
\caption{Monster fusion law}\label{tab:monsterfusion}
\end{table}

Suppose that $A$ is a non-associative algebra over a field $\F$.  We denote the 
adjoint of $a \in A$ by $\ad_a$ which is the map $x \mapsto ax$.  For $\lm \in \F$, 
we denote the $\lm$-eigenspace of $\ad_a$ by $A_\lm(a)$ and we will write $A_\lm$ 
where $a$ is understood.  We also write $A_S(a) = \bigoplus_{\lm \in S} A_\lm(a)$, 
where $S \subseteq \F$.

\begin{definition}
Suppose that $A$ is a commutative non-associative algebra over a field $\F$ and 
$\cF$ is a symmetric fusion law with $\cF \subset \F$.  The pair $A = (A, X)$ is an 
\emph{$\cF$-axial algebra} if $X \subseteq A$ is a generating set of non-zero 
idempotents and every $a \in X$ satisfies the following:
\begin{enumerate}
\item $A = A_\cF(a)$;  that is, $\ad_a$ is semisimple with all eigenvalues in $\cF$;
\item $A_\lm(a)A_\mu(a) \subseteq A_{\lm \star \mu}(a)$ for all $\lm,\mu\in\cF$.
\end{enumerate}
\end{definition}

Elements of the set $X$ are called \emph{axes}.  We will also drop the $\cF$- where 
it is understood and just talk of axial algebras.

Since each $a \in X$ is an idempotent, one of the eigenvalues in $\cF$ is $1$.  We 
call an axis $a$ \emph{primitive} if $A_1(a) = \la a \ra$ is $1$-dimensional.  We 
say that $A$ is \emph{primitive} if every axis $a \in X$ is primitive.

The Griess algebra, used to construct the Monster sporadic simple group, is an 
example of an axial algebra with the fusion law in Table \ref{tab:monsterfusion}.  
Hence we call this fusion law the \emph{Monster fusion law}.  If an axial algebra 
has the Monster fusion law, we say that it is of \emph{Monster type}.  Recently, 
there has been a lot of interest in such algebras and also those with a generalised 
form of the Monster fusion law, with arbitrary $\al$ and $\beta$ in place of 
$\frac{1}{4}$ and $\frac{1}{32}$.

The Griess algebra also has a bilinear form which associates with the algebra product.

\begin{definition}
A Frobenius form for an axial algebra $A$ is a non-zero bilinear form $(\cdot,\cdot) 
\colon A \times A \to \F$ such that for all $x,y,z \in A$
\[
(x,yz) = (xy,z).
\]
\end{definition}

Note that a Frobenius form is automatically symmetric \cite[Proposition 3.5]{Axial1}.  
In the Griess algebra, the Frobenius form also has the property that $(a,a)=1$ for 
all axes $a$, but we don't require this in our definition.

\subsection{Gradings and automorphisms}

The key property that axial algebras and Majorana algebras generalise from the Griess 
algebra is that there is a natural link between axes and involutory automorphisms.  
This link occurs precisely when we have a graded fusion law.

In general, axial algebras can be graded by any group $T$, but the axial algebras of 
Monster type, which we are particularly concerned with in this paper, have a 
$\Z_2$-grading.  So here we give a simplified version of the definition of a 
$\Z_2$-grading.  For the more general $T$-grading see \cite{axialstructure} and for 
a categorical treatment see \cite{DPSV}.  We will write $\Z_2$ as $\{ +, - \}$ with 
the usual multiplication of signs.

\begin{definition}
A \emph{$\Z_2$-grading} on a fusion law $\cF$ is a map $\gr \colon \cF \to \Z_2$ such that $\gr(\nu) = \gr(\lambda) \gr(\mu)$ for all $\lambda, \mu \in \cF$ and all $\nu \in \lambda \star \mu$.
\end{definition}

Note that the Monster fusion law $\mathcal{M}$ is $\mathbb{Z}_2$-graded where $1,0,\frac{1}{4}$ all map to $+$ and $\frac{1}{32}$ maps to $-$.  We will write $A_\epsilon := \bigoplus_{\lambda \in \gr^{-1}(\epsilon)} A_\lambda$ for $\epsilon = \pm$.  Let $a$ be an axis in $A$.  We define a map $\tau_a \colon A \to A$ by
\[
v \mapsto \gr(\lambda) v
\]
for $v \in A_\lambda$, $\lambda \in \cF$, and extend linearly to $A$.  Since multiplication obeys the fusion law, this is an automorphism of $A$ and we call $\tau_a$ the \emph{Miyamoto involution} associated to $a$.  Note that $\tau_a$ negates $A_-$ and acts as the identity on $A_+$.  Even though we call $\tau_a$ an involution, it is the identity if $A_- = 0$.  So that we can have involutions coming from the grading, we will always assume that $\F$ does not have characteristic $2$, because in this case $-1=+1$.

The following is an easy lemma.

\begin{lemma}\label{autaxis}
Suppose that $A$ is an $\mathcal{F}$-axial algebra.  Let $a \in X$ and $g \in \Aut(A)$.  Then $a^g$ is also an axis of $A$ and
\[
A_\lambda(a^g) = A_\lambda(a)^g
\]
for all $\lambda \in \mathcal{F}$.  If $\mathcal{F}$ is $\mathbb{Z}_2$-graded, then $\tau_a^g = \tau_{a^g}$.
\end{lemma}

Since we have a (possibly) different automorphism for each axis $a$, this generates a group of automorphisms.

\begin{definition}
Let $A$ be an axial algebra with a $\mathbb{Z}_2$-graded fusion law.  Then the \emph{Miyamoto group} is the group
\[
G(X) := \langle \tau_a : a \in X \rangle.
\]
\end{definition}

We may also generalise this notation and consider the group $G(Y) := \langle \tau_a : a \in Y \rangle$ for any subset $Y \subseteq X$.  We define $\bar{Y} = Y^{G(Y)}$.  It turns out that $G(\bar{Y}) = G(Y)$ and so $\bar{Y}^{G(\bar{Y})} = \bar{Y}$.  We call $\bar{Y}$ the \emph{closure} of $Y$ and we say that $Y$ is \emph{closed} if $Y = \bar{Y}$.  In \cite{axialstructure}, it is also shown that $\lla Y \rra = \lla \bar{Y} \rra$, where $\lla Y \rra$ is the algebra generated by $Y$.  In this paper, we will normally assume that the set $X$ of axes is closed as we can always enlarge $X$ to $\bar{X}$ without changing the algebra, or Miyamoto group.  When the set $X$ is closed, $G = G(X)$ acts faithfully on $X$.  

In this project, we will be classifying a large number of algebras $(A, X)$ in terms of the action of $G(X)$ on $X$ and also their $\tau$-map from $X$ to $G(X)$.  In addition, we will also introduce something called the shape of the algebra and this depends on our knowledge of the $2$-generated algebras.


\subsection{Norton-Sakuma algebras}\label{sec:dihedral}
\begin{table}[p]
\setlength{\tabcolsep}{4pt}
\renewcommand{\arraystretch}{1.5}
\centering
\footnotesize
\begin{tabular}{c|c|c}
Type & Basis & Products \& form \\ \hline
$2\textrm{A}$ & \begin{tabular}[t]{c} $a_0$, $a_1$, \\ $a_\rho$ 
\end{tabular} & 
\begin{tabular}[t]{c}
$a_0 \cdot a_1 = \frac{1}{2^3}(a_0 + a_1 - a_\rho)$ \\
$a_0 \cdot a_\rho = \frac{1}{2^3}(a_0 + a_\rho - a_1)$ \\
$(a_0, a_1) = (a_0, a_\rho)= (a_1, a_\rho) = \frac{1}{2^3}$
\vspace{4pt}
\end{tabular}
\\
$2\textrm{B}$ & $a_0$, $a_1$ &
\begin{tabular}[t]{c}
$a_0 \cdot a_1 = 0$ \\
$(a_0, a_1) = 0$
\vspace{4pt}
\end{tabular}
\\
$3\textrm{A}$ & \begin{tabular}[t]{c} $a_{-1}$, $a_0$, \\ $a_1$, $u_\rho$ \end{tabular} &
\begin{tabular}[t]{c}
$a_0 \cdot a_1 = \frac{1}{2^5}(2a_0 + 2a_1 + a_{-1}) - \frac{3^3\cdot5}{2^{11}} u_\rho$ \\
$a_0 \cdot u_\rho = \frac{1}{3^2}(2a_0 - a_1 - a_{-1}) + \frac{5}{2^{5}} u_\rho$ \\
$u_\rho \cdot u_\rho = u_\rho$, $(a_0, a_1) = \frac{13}{2^8}$ \\
$(a_0, u_\rho) = \frac{1}{2^2}$, $(u_\rho, u_\rho) = \frac{2^3}{5}$ 
\vspace{4pt}
\end{tabular}
\\
$3\textrm{C}$ & \begin{tabular}[t]{c} $a_{-1}$, $a_0$, \\ $a_1$ \end{tabular} &
\begin{tabular}[t]{c}
$a_0 \cdot a_1 = \frac{1}{2^6}(a_0 + a_1 - a_{-1})$ \\
$(a_0, a_1) = \frac{1}{2^6}$
\vspace{4pt}
\end{tabular}
\\
$4\textrm{A}$ & \begin{tabular}[t]{c} $a_{-1}$, $a_0$, \\ $a_1$, $a_2$ \\ $v_\rho$ \end{tabular} &
\begin{tabular}[t]{c}
$a_0 \cdot a_1 = \frac{1}{2^6}(3a_0 + 3a_1 + a_{-1} + a_2 - 3v_\rho)$ \\
$a_0 \cdot v_\rho = \frac{1}{2^4}(5a_0 - 2a_1 - a_2 - 2a_{-1} + 3v_\rho)$ \\
$v_\rho \cdot v_\rho = v_\rho$, $a_0 \cdot a_2 = 0$ \\
$(a_0, a_1) = \frac{1}{2^5}$, $(a_0, a_2) = 0$\\
$(a_0, v_\rho) = \frac{3}{2^3}$, $(v_\rho, v_\rho) = 2$
\vspace{4pt}
\end{tabular}
\\
$4\textrm{B}$ & \begin{tabular}[t]{c} $a_{-1}$, $a_0$, \\ $a_1$, $a_2$ \\ $a_{\rho^2}$ \end{tabular} &
\begin{tabular}[t]{c}
$a_0 \cdot a_1 = \frac{1}{2^6}(a_0 + a_1 - a_{-1} - a_2 + a_{\rho^2})$ \\
$a_0 \cdot a_2 = \frac{1}{2^3}(a_0 + a_2 - a_{\rho^2})$ \\
$(a_0, a_1) = \frac{1}{2^6}$, $(a_0, a_2) = (a_0, a_{\rho^2})= \frac{1}{2^3}$
\vspace{4pt}
\end{tabular}
\\
$5\textrm{A}$ & \begin{tabular}[t]{c} $a_{-2}$, $a_{-1}$,\\ $a_0$, $a_1$,\\ $a_2$, $w_\rho$ \end{tabular} &
\begin{tabular}[t]{c}
$a_0 \cdot a_1 = \frac{1}{2^7}(3a_0 + 3a_1 - a_2 - a_{-1} - a_{-2}) + w_\rho$ \\
$a_0 \cdot a_2 = \frac{1}{2^7}(3a_0 + 3a_2 - a_1 - a_{-1} - a_{-2}) - w_\rho$ \\
$a_0 \cdot w_\rho = \frac{7}{2^{12}}(a_1 + a_{-1} - a_2 - a_{-2}) + \frac{7}{2^5}w_\rho$ \\
$w_\rho \cdot w_\rho = \frac{5^2\cdot7}{2^{19}}(a_{-2} + a_{-1} + a_0 + a_1 + a_2)$ \\
$(a_0, a_1) = \frac{3}{2^7}$, $(a_0, w_\rho) = 0$, $(w_\rho, w_\rho) = \frac{5^3\cdot7}{2^{19}}$
\vspace{4pt}
\end{tabular}
\\
$6\textrm{A}$ & \begin{tabular}[t]{c} $a_{-2}$, $a_{-1}$,\\ $a_0$, $a_1$,\\ $a_2$, $a_3$ \\ $a_{\rho^3}$, $u_{\rho^2}$ \end{tabular} &
\begin{tabular}[t]{c}
$a_0 \cdot a_1 = \frac{1}{2^6}(a_0 + a_1 - a_{-2} - a_{-1} - a_2 - a_3 + a_{\rho^3}) + \frac{3^2\cdot5}{2^{11}}u_{\rho^2}$ \\
$a_0 \cdot a_2 = \frac{1}{2^5}(2a_0 + 2a_2 + a_{-2}) - \frac{3^3\cdot5}{2^{11}}u_{\rho^2}$ \\
$a_0 \cdot u_{\rho^2} = \frac{1}{3^2}(2a_0 - a_2 - a_{-2}) + \frac{5}{2^5}u_{\rho^2}$ \\
$a_0 \cdot a_3 = \frac{1}{2^3}(a_0 + a_3 - a_{\rho^3})$, $a_{\rho^3} \cdot u_{\rho^2} = 0$\\
$(a_0, a_1) = \frac{5}{2^8}$, $(a_0, a_2) = \frac{13}{2^8}$ \\
$(a_0, a_3) = \frac{1}{2^3}$, $(a_{\rho^3}, u_{\rho^2}) = 0$,
\end{tabular}
\end{tabular}
\caption{Norton-Sakuma algebras}\label{tab:sakuma}
\end{table}

The $2$-generated axial algebras of Monster type have been classified in a number of papers under slightly different assumptions and the most general result is the following.

\begin{theorem}\textup{\cite{2gen}}\footnote{This result was first announced at the Axial Algebra Focused Workshop in Bristol in May 2018.  It has also been checked computationally by M. Whybrow.}
A primitive $2$-generated axial algebra of Monster type over a field of characteristic $0$ is one of the eight listed in Table $\ref{tab:sakuma}$.  They are called \emph{Norton-Sakuma} algebras.
\end{theorem}

We will now explain how to interpret Table \ref{tab:sakuma}.  Let $A = \lla a_0, a_1 \rra$ be one of the Norton-Sakuma algebras.  Since $A$ is $2$-generated, the Miyamoto group $G = \la \tau_{a_0}, \tau_{a_1} \ra \cong D_{2m}$ is a dihedral group.  Let $\rho := \tau_{a_0}\tau_{a_1}$.  We define
\[
a_{\epsilon + 2k} = a_\epsilon^{\rho^k}
\]
By Lemma \ref{autaxis}, $a_i$ is also an axis of $A$ and so the set $X$ of all axes $a_i$ is a closed set of axes.  The algebras are all named $n\L$, where $n = |X|$ and $\L$ is some letter.  For most of the algebras, the axes $a_i$ do not span the algebra.  For these algebras, we introduce additional elements which are indexed by powers of $\rho$.  The Miyamoto group $G$ fixes each of these additional elements apart from $w_\rho$ in $5\A$ which is inverted by each involution.  We only give the multiplication and values in the Frobenius form for some of the basis elements.  The remaining ones can be deduced by using the action of the Miyamoto group.

From Table \ref{tab:sakuma}, we have the following lemma.  First, fix notation by defining $D := D_{a,b}$ to be the dihedral group generated by $\tau_a$ and $\tau_b$ for axes $a, b \in X$.  Define $X_{a,b} = a^D \cup b^D$.  It is clear that $D_{a,b} = D_{b,a}$ and $X_{a,b} = X_{b,a}$.

\begin{lemma}\label{dihedralorbs}\textup{\cite[Lemma 3.2]{axialconstruction}}
Let $A$ be an axial algebra of Monster type, $a,b \in X$ and $D = D_{a,b}$.  Then we have the following:
\begin{enumerate}
\item[$1.$] $k := |a^D| = |b^D|$.
\item[$2.$] If $a$ and $b$ are in the same orbit under $D$, then $k = 1$, $3$, or $5$.
\item[$3.$] If $a$ and $b$ are in different orbits, then $k = 1$, $2$, or $3$.
\end{enumerate}
Moreover, the Norton-Sakuma algebra generated by $a$ and $b$ has type $n\textrm{L}$, where $n = |X_{a,b}|$.
\end{lemma}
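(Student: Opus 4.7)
The plan is to reduce the statement to an inspection of Table~\ref{tab:sakuma} via the classification of $2$-generated axial algebras of Monster type. The subalgebra $B := \langle\langle a, b \rangle\rangle$ is itself a $2$-generated axial algebra of Monster type, so by Conjecture~\ref{conj:dihedral} (established in \cite{Axial1} for algebras over a field of characteristic $0$ with a Frobenius form, which is the setting relevant here) $B$ is one of the nine Norton--Sakuma algebras. Its Miyamoto group, acting on the axes of $B$, is exactly the dihedral group $D = \langle \tau_a, \tau_b \rangle$ of order $2n$, where $n$ is the order of $\rho := \tau_a \tau_b$. So the lemma reduces to a question about how $D$ permutes $X_{a,b}$.

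The next step is a pure dihedral calculation inside $D$. Combining Lemma~\ref{autaxis} with the definition $a_{\epsilon + 2k} = a_\epsilon^{\rho^k}$, conjugation yields the identity $\tau_{a_i} = \tau_a \rho^i$ for every $i$. Hence $a_0^{\rho^k} = a_0$ if and only if $\tau_a \rho^{2k} = \tau_a$, i.e.\ $n \mid 2k$; so the $\langle \rho \rangle$-orbit of $a$ has size $n$ when $n$ is odd and size $n/2$ when $n$ is even, and by symmetry the same holds for $b$. The axes $a$ and $b$ lie in a common $\langle\rho\rangle$-orbit iff $n \mid 2k - 1$ for some $k$, which happens precisely when $n$ is odd. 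When $n$ is even, the dihedral relation $\tau_a \tau_b \tau_a = \rho \tau_a = \tau_a \rho^{-1} = \tau_{a_{-1}}$ shows that $\tau_a$ sends $a_1$ to $a_{-1}$, and hence preserves each of the two $\langle\rho\rangle$-orbits; they therefore remain distinct under all of $D$. Together with Lemma~\ref{taufix} this gives $|a^D| = |b^D| = k$ (establishing part~1), with $k = n$ or $n/2$ according to parity, and $|X_{a,b}| = n$ in all cases.

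Finally I would invoke Sakuma's theorem (the substantive content of step one) to force $n \leq 6$. Substituting each possibility into the parity dichotomy above gives exactly the outcomes recorded in Table~\ref{tab:sakuma}: $k \in \{1, 3, 5\}$ when $a$ and $b$ share an orbit (types $1\mathrm{A}$, $3\mathrm{A}/3\mathrm{C}$, $5\mathrm{A}$) and $k \in \{1, 2, 3\}$ when they do not (types $2\mathrm{A}/2\mathrm{B}$, $4\mathrm{A}/4\mathrm{B}$, $6\mathrm{A}$), yielding parts~(2) and~(3) as well as the identification of the type $n\mathrm{L}$ with $n = |X_{a,b}|$. The only real obstacle is the appeal to Sakuma's theorem; once $B$ is known to be Norton--Sakuma everything else is elementary dihedral bookkeeping.
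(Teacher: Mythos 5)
Your proposal is correct and follows essentially the route the paper relies on: the paper gives no proof of its own but cites \cite[Lemma 3.2]{axialconstruction}, whose argument is exactly this reduction to the Norton--Sakuma classification followed by the orbit bookkeeping that the paper itself records in Section \ref{sec:dihedral} (one orbit of size $n$ containing both generators when $n$ is odd, two disjoint orbits of size $n/2$ when $n$ is even). One small caution: your dihedral identities control only the Miyamoto involutions $\tau_{a_i}$ and not the axes themselves (distinct axes can share a $\tau$, e.g.\ in type $2\A$ where all $\tau$'s are trivial), so the ``if'' directions of your orbit computations genuinely require the inspection of Table \ref{tab:sakuma} that you promise at the outset rather than following from $\tau_{a_i}=\tau_a\rho^i$ alone.
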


\begin{proposition}\label{k-algebra}
Let $a, b$ be axes in an axial algebra of Monster type such that $\lla a, b \rra \cong n\L$.  Then, $(\tau_a \tau_b)^n = 1$.
\end{proposition}
\begin{proof}
By Lemma \ref{dihedralorbs}, the orbit of $a$ under $D = D_{a,b}$ has size $k$.  So, $z := (\tau_a \tau_b)^k$ fixes $a$.  Similarly, it fixes $b$ and so $z$ acts trivially on the subalgebra $\lla a, b\rra$.  Note that $\tau_a$ and $\tau_b$ both invert $z$.  Since $z$ fixes $a$ and $b$, it commutes with $\tau_a$ and $\tau_b$.  Hence, $z^{-1} = z$ and $z^2 = 1$.

If $n$ is even, then by Lemma \ref{dihedralorbs}, $n = 2k$ and we are done.  If $n$ is odd, then $n = k$, but additionally $a^D = b^D$ and so $\tau_a$ and $\tau_b$ are conjugate in $D$.  However, $\tau_a$ and $\tau_b$ generate the dihedral group $D$ and they can only be conjugate in $D$ if the order of $D$ is twice an odd number.  Hence, $z$ cannot be of order two and $1 = z=(\tau_a \tau_b)^n$.
\end{proof}

\begin{corollary}\label{6trans}
Every Miyamoto group of an axial algebra of Monster type is a $6$-transposition group.
\end{corollary}

As can be seen in the table, some Norton-Sakuma algebras are subalgebras of other Norton-Sakuma algebras.  Namely, we have the following inclusions, where $c,d \in X_{a,b}$.
\[
\begin{array}{c|c}
\lla a,b \rra & \lla c,d \rra \\
\hline
4\textrm{A} & 2\textrm{B} \\
4\textrm{B} & 2\textrm{A} \\
6\textrm{A} & 2\textrm{A} \\
6\textrm{A} & 3\textrm{A}
\end{array}
\]

\subsection{Shapes}\label{sec:shapes}

Suppose we have an algebra $(A, X)$ with a given Miyamoto group $G(X)$, a known action of $G := G(X)$ on $X$ and a known $\tau$-map.  Then there is still a possibility of multiple non-isomorphic algebras $A$ under this restrictive conditions.  This is because for two axes $a, b \in X$, the above information does not tell us which particular Norton-Sakuma subalgebra is generated by $a$ and $b$.  The shape formalises such choices.  The complete description of shapes is given in \cite{axialconstruction}, here we just want to give the reader some necessary details.

Every pair of axes $a \neq b$ gives us a Norton-Sakuma algebra $n\L$ where $n$ is known by Lemma \ref{dihedralorbs}.  All Norton-Sakuma algebras are symmetric, that is they admit an automorphism switching $a=a_0$ and $b=a_1$.  So we can think of $a,b$ as a $2$-element subset of $X$.  Note that conjugate subsets generate isomorphic Norton-Sakuma algebras.  So we may consider pairs up to the action of $G$.  Additionally, the little table at the end of Section \ref{sec:dihedral} indicates dependences between the choices of different pairs $a,b$ and $c,d$ caused by inclusion between the Norton-Sakuma algebras.  We denote by $X \choose 2$ the set of all $2$-element subsets of $X$.

\begin{definition}
The \emph{shape graph} $\Gamma$ has vertices given by the orbits of $G$ on $X \choose 2$ and an edge between $\{a,b\}^G$ and $\{c,d\}^G$ when $X_{a,b}$ is contained in $X_{c,d}$ or vice versa.
\end{definition}

Choosing a suitable Norton-Sakuma algebra $n\L$ for one vertex in a connected component of $\Gamma$ specifies the choices for all the other vertices in that component.  However, if the component contains a vertex with $n=5$, or $6$, then because these can only be $5\A$ and $6\A$, we don't have a choice for these components.  However, in our project no such components exists and so we have one binary choice for each connected component.  Making these choices for all components is choosing the shape of the algebra.  Where $G$ has a non-trivial outer automorphism group, this acts on the shapes and conjugate shapes give isomorphic algebras.  Hence we will only consider one shape in each orbit under $\Out(G)$.

Note that we describe the shape by a sequence of Norton-Sakuma algebra names which indicate our choices for the components, one name per component.  For example, for the group $G = 2^2$ on $6$ axes, considered in Proposition \ref{4B2B}, there are two $G$-orbits of $4\L$ subalgebras, but the subalgebras from the two orbits intersect in a common $2\L'$ subalgebra and hence lie in the same connected component.  So there is only one choice for $\L$ and we label the shape $4\B \, (2\B)^2$ rather than $(4\B)^2 \, (2\B)^2$ in Table \ref{tab:min0dims}.

The Partial Axial Algebras package \cite{ParAxlAlg} in {\sc magma} has functions to compute all possible $\tau$-maps for a given action of a group $G$ on $X$ and enumerate all possible shapes for a given $G$, $X$ and $\tau$-map.  The main part of the package implements the expansion algorithm in \cite{axialconstruction} that starts with a shape and either outputs the largest axial algebra of the given shape, which can be trivial (in which case we say the shape \emph{collapses}), or it stops when the next step would expand to an object which is too big for the current computers.


\section{Groups for $3$-generated $4$-algebras}\label{sec:groups}

In this paper, we are interested in classifying all $3$-generated axial algebras of Monster type which do not have a $5\A$, or $6\A$ subalgebra - we call such algebras $4$-algebras.  By Proposition \ref{k-algebra}, the Miyamoto group of a $4$-algebra is a $4$-transposition group $(G, T)$ with respect to the normal set of Miyamoto involutions $T$.  Recall that a Miyamoto involution can be trivial, so we allow $1 \in T$.

Suppose that $A = \lla a, b,c \rra$ is a $3$-generated axial algebra of Monster type.  Then its Miyamoto group is generated by $x:=\tau_a$, $y:=\tau_b$ and $z:=\tau_c$.  So we first begin by classifying all $4$-transposition groups $(G, T)$ which are generated by three elements $x, y, z$ of $T$.  After this, we investigate all possible actions of $G$ on a putative set $X$ of axes together with a map $\tau \colon X \to G$ with image $T$.  Also $X$ must contain three elements $a$, $b$ and $c$ which $\tau$ maps to $x$, $y$ and $z$, such that $X = a^G \cup b^G \cup c^G$.   Since the Miyamoto group acts faithfully on $A$ and hence on $X$, we require our action to be faithful.  Interestingly, this condition eliminates many cases. 

Note that we do not assume that the orbits $a^G$, $b^G$ and $c^G$ are pairwise disjoint and hence we do not assume that the conjugacy classes of $x$, $y$ and $z$ are pairwise distinct either.

\subsection{$3$-generated $4$-transposition groups}

We approach the problem via presentations. Clearly, every $t\in T$ satisfies $t^2=1$. Furthermore, for $s,t\in T$, we have either $(st)^3=1$, or $(st)^4=1$.  Note that the last case includes the case where $(st)^2=1$.  Since we will later take all quotients to build a full list of groups, it suffices to just add relations of the form $(st)^3=1$, or $(st)^4=1$.

We begin by imposing the relations $(st)^3=1$, or $(st)^4=1$ for each distinct pair $\{s,t\} \subset \{x,y,z\}$.  This gives four main cases corresponding to the number of pairs of each order.  Since products of conjugates of the generators must also have order at most $4$, we may also add extra relations of the form $(st^g)^n=1$, with $n = 2,3,4$.  We continue adding relations of this form until the group is indeed a $4$-transposition group.

Let us illustrate this with the largest case where $(xy)^4 = (xz)^4 = (yz)^4 = 1$.  We look at the elements $xy^z$ and $xz^y$.  If one of them has order $3$, then the group obtained is finite and we can quickly finish the analysis.  So suppose that they are both order $4$.  In this case we look at the extra element $yz^x$.  Again, if this has order $3$, the group is finite and we can finish the analysis.  So assume that it has order $4$.  Finally we look at the element $xx^{yz}$ and find that the group is finite when this element has order $3$, or order $4$.  The largest group that we get here is when $(xx^{yz})^4 = 1$ and then the group has order $32, 768$.  The group is not a group of $4$-transpositions, so we still have to find extra relations.  Both $yy^{xz}$ and $zz^{xy}$ have order $8$ and by forcing them to have order $4$ we finally get a $4$-transposition group $Q$, which has order $8, 192$.  Clearly every quotient of $Q$ is a $3$-generated $4$-transposition group and vice versa it follows from our analysis that every $3$-generated $4$-transposition group that is a $2$-group is a quotient of $Q$.

The above analysis is easily done in, for example, {\sc magma} and we had to consider $14$ cases where we encountered $19$ groups, whose quotients constitute all $3$-generated $4$-transposition groups.  In particular, we see that all such groups are finite.
Some of these however can be discarded, for example all the $2$-groups are already quotients of $Q$.  In order for us to make more precise claims, we first need to discuss similarity.

\subsection{Similar groups}

Using {\sc magma}, we could find all quotients of the above $19$ groups and list them up to isomorphism mapping the $3$ marked generators of one to the $3$ marked generators of the other in any order.  However, this is not the equivalence we need.

Suppose that $x'$ is conjugate to $x = \tau_a$, $y'$ to $y$, and $z'$ to $z$. Then, by Lemma \ref{autaxis}, there is an axis $a'$ in the orbit $a^G$ such that $x'=\tau_{a'}$. Similarly, 
we find $b'\in b^G$ and $c'\in c^G$ such that $y'=\tau_{b'}$ and $z'=\tau_{c'}$. Suppose that $G' := \langle x', y', z' \rangle = G$.  Then the algebra $A' := \lla a', b', c' \rra$ is invariant under the action of $G = G'$ and so it contains $a,b,c$.  Since $X$ is a closed set of axes and $G = G'$, we have $A' = A$.

Recall that a multiset is a set where we allow repeated elements.  We make the following definition:

\begin{definition}
Let $G = \langle x,y,z \rangle$ and $G' = \langle x', y', z' \rangle$ be two $3$-generated groups.  We say $G$ and $G'$ are \emph{similar} if there is an isomorphism $\phi \colon G\to G'$ such that the multiset $\{ (x')^{G'}, (y')^{G'}, (z')^{G'} \}$ coincides with the multiset $\{ \phi(x)^{G'}, \phi(y)^{G'}, \phi(z)^{G'} \}$.
\end{definition}

By the above argument, similar groups will give isomorphic algebras.  Note that if $G$ and $G'$ are similar via $\phi$ then $G/N$ is similar to $G'/N'$, where $N \unlhd G$ and $N'=N^\phi$.  Hence, we may consider our list of $19$ groups up to similarity.  This, and discarding smaller $2$-groups, reduces our list to seven groups.  Using {\sc magma} to take all quotients of groups in this list up to similarity, we get the following.

\begin{proposition}\label{numbergroups}
There are exactly $55$ $3$-generated $4$-transposition groups up to similarity.
\end{proposition}

Recall that since we allow generators to be trivial, the trivial group for example is one of these.  The largest group is the $2$-group $Q$, the largest non-$2$-group has order $3,888$ and the only non-soluble groups are $PSL(2,7)$ and $2 \times PSL(2,7)$.

\section{Configuration of axes}\label{sec:axes}

We now consider what the possible configurations of axes are, given a putative Miyamoto group. More precisely, for $X$ to be a set of axes in an axial algebra with Miyamoto group $G = \la T \ra$ generated by a normal set of involutions $T$, $G$ must act faithfully on $X$ and there must exist a map $\tau \colon X \to T$ such that for all $d \in X$ and $g \in G$
\begin{enumerate}
\item $\tau_d \in G_d$
\item $\tau_{d^g} = \tau_d^g$
\end{enumerate}
where we write $G_d$ for the stabiliser of $d \in X$ in $G$.  We have the following easy result.

\begin{lemma}\label{taufix}
Let $A$ be an axial algebra of Monster type with Miyamoto group $G$ and $d$ an axis of $A$.  Then, $\tau_d \in Z(G_d)$.
\end{lemma}
\begin{proof}
Clearly $\tau_d \in G_d$ since $\gr(1) = +$.  Let $g \in G_d$.  Then, $\tau_d^g = \tau_{d^g} = \tau_d$ and so $\tau_d \in Z(G_d)$.
\end{proof}

By property $2$ above, it is clear that once we have identified the stabiliser $G_d$ of an axis $d \in X$, then the action of $G$ on $d^G$ is identified with the coset action on $G$ on $G_d$.  By Lemma \ref{taufix}, we have that $\la \tau_d \ra \leq G_d \leq C_G(\tau_d)$.

In the first subsection, we give several general results which give us better control over the stabiliser.  We use this in the next subsection to show how to limit the choice of stabiliser and we apply this in our case.  In the third subsection, we use faithfulness of the action to eliminate more cases.  Finally, in the last subsection, we describe how to find the different possible configurations of axes $X$ given the stabilisers of the axes and we apply this to our situation.

\subsection{Stabilisers of axes}

Note that the results in this section depend on the properties of the Norton-Sakuma algebras and so they are specific to axial algebras of Monster type.

\begin{lemma}\label{symmetricorbs}
Let $d,e$ be two axes.  If $\tau_e \in G_d$, then $\tau_d \in G_e$.
\end{lemma}
\begin{proof}
Let $D := \la \tau_d, \tau_e \ra$.  Since $\tau_d$ and $\tau_e$ both fix $d$, $|d^D| = 1$.  Since $\lla d,e \rra$ is a Norton-Sakuma algebra, by Lemma \ref{dihedralorbs}, we must have $|e^D| =1$ also and so $\tau_d \in G_e$.
\end{proof}

Now, let us see when $\tau_d$ is trivial.

\begin{lemma}\label{identity}
Let $A$ be an axial algebra of Monster type and $d$ an axis of $A$.  Then, $\tau_d = 1$ if and only if $d$ is fixed by the entire Miyamoto group $G$.
\end{lemma}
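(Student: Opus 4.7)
The plan is to prove both implications by pairing $a$ with an arbitrary $b\in X$ and invoking Lemma \ref{dihedralorbs}, which gives the symmetric identity $|a^D|=|b^D|$ for $D:=D_{a,b}=\la\tau_a,\tau_b\ra$. The asymmetry in the statement (a hypothesis on $a$ alone) is dissolved by this identity: it converts a condition on one axis of the pair into the same condition on the other. This is the only non-trivial ingredient of the proof.

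For the forward direction I would assume $\tau_a=1$, so that $D=\la\tau_b\ra$ is cyclic. Since $b$ is idempotent, $b\in A_1(b)\subseteq A_+(b)$, hence $\tau_b(b)=b$ and $|b^D|=1$. Lemma \ref{dihedralorbs} then forces $|a^D|=1$, so in particular $\tau_b(a)=a$. As $b$ was an arbitrary generator of the Miyamoto group, $a$ is fixed by all of $G$. Conversely, if $a$ is fixed by $G$, then for any $b\in X$ the subgroup $D\le G$ fixes $a$, so $|a^D|=1$. Lemma \ref{dihedralorbs} now gives $|b^D|=1$, and in particular $\tau_a\in D$ fixes $b$. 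Thus $\tau_a$ is an algebra automorphism of $A$ fixing every element of the generating set $X$, so $\tau_a=\id$.

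The whole argument pivots on a single application (in each direction) of the identity $|a^D|=|b^D|$, so I do not expect any real obstacle. The only points requiring a brief check are that $\tau_b$ always fixes $b$ (immediate from $b\in A_1(b)$) and that an algebra automorphism fixing $X$ must be the identity (immediate since $\la\la X\ra\ra=A$). I would therefore expect the final write-up to run to only a few lines per direction.
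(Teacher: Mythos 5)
Your proposal is correct and follows essentially the same route as the paper: both directions hinge on the single identity $|a^D|=|b^D|$ from Lemma \ref{dihedralorbs} applied to $D=\la\tau_a,\tau_b\ra$ for an arbitrary second axis $b$. Your closing step (an automorphism fixing the generating set $X$ is the identity) is exactly the justification the paper gives earlier for why $G$ acts faithfully on $X$, which is the fact its proof invokes at the same point.
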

\begin{proof}
Let $e \in X$ be another axis of $A$.  Suppose that $\tau_d=1$.  Then, by Lemma \ref{symmetricorbs}, $\tau_e \in G_d$.  Since this is true for every axis $e$ and $G$ is generated by the Miyamoto involutions, $G$ fixes $d$.

Conversely, suppose that $G$ fixes $d$.  Then, for every axis $e \in X$, $\tau_e$ fixes $d$.  So, by Lemma \ref{symmetricorbs}, $\tau_d$ fixes every axis $e \in X$.  However, $G$ acts faithfully on the axes, so $\tau_d=1$.
\end{proof}

We now consider the case where a Miyamoto involution $u$ is not the identity.

\begin{definition}
A non-trivial Miyamoto involution $u$ is \emph{unique} if there exists a unique axis $d \in X$ such that $u=\tau_d$.  We say an axis $d \in X$ is \emph{unique} if $\tau_d$ is unique.
\end{definition}

It is easy to see that when $u$ is unique, the stabilizer $G_d$ coincides with the centraliser $C_G(u)$.  In particular, it is as large as it can be.  By property $2$ of $\tau$, $G_d = C_G(u)$ if and only if there is a natural $G$-invariant bijection between the $G$-orbit $u^G$ and the conjugacy class $d^G$.

Recall that in this paper, we are interested in the $4$-algebra case.  In particular, we have no $5\A$, or $6\A$ subalgebras.  We allow these cases in the following results for completeness.

\begin{lemma}\label{uniqueprop}
Let $A$ be an axial algebra of Monster type and $d \in X$.
\begin{enumerate}
\item If there exists $e \in X$ such that the order of $\tau_d \tau_e$ is $5$ then $\tau_d$ is unique.
\item If $A$ has no subalgebras of type $6\A$ and there exists $e \in X$ such that the order of $\tau_d \tau_e$ is $3$ then $\tau_d$ is unique.
\end{enumerate}
\end{lemma}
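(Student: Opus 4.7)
The plan is, in each case, to suppose there is some $c \in X$ with $\tau_c = \tau_a$ and to argue that necessarily $c = a$. The common starting point is that $\tau_c = \tau_a$ forces $\tau_c \tau_b = \tau_a \tau_b$, so the dihedral groups coincide: $D := D_{a,b} = D_{c,b}$. I will then invoke Lemma~\ref{dihedralorbs} to pin down the Norton-Sakuma types of $\la\la a, b \ra\ra$ and $\la\la c, b \ra\ra$, and observe that the $D$-orbits $a^D$ and $c^D$ are forced to coincide because they both contain $b$. The lemma will then reduce to the elementary observation that within each relevant Norton-Sakuma algebra, the axes in a single dihedral orbit have pairwise distinct Miyamoto involutions.

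For Part~1, the hypothesis that $\tau_a \tau_b$ has order $5$ gives $D \cong D_{10}$, and Lemma~\ref{dihedralorbs}, combined with $k \leq 5$ (coming from the fact that $\tau_a$ stabilises $a$), leaves $5\A$ as the only type compatible for both $\la\la a, b \ra\ra$ and $\la\la c, b \ra\ra$. In $5\A$ the two generating axes lie in a common $D$-orbit of size $5$, so $b \in a^D \cap c^D$, the orbits coincide, and $c \in a^D$. The five Miyamoto involutions attached to the five axes in this orbit are the five distinct reflections of $D_{10}$, so $\tau_a = \tau_c$ forces $c = a$.

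For Part~2, $\tau_a \tau_b$ has order $3$ and so $D \cong S_3$. Lemma~\ref{dihedralorbs} now leaves, \emph{a priori}, three possible types for each of $\la\la a, b \ra\ra$ and $\la\la c, b \ra\ra$: types $3\A$ and $3\C$ (with the two axes sharing a $D$-orbit of size $3$, giving $|X_{a,b}| = 3$), and type $6\A$ (with the two axes in different $D$-orbits of size $3$, giving $|X_{a,b}| = 6$). The hypothesis that $A$ contains no $6\A$ subalgebra is precisely what is needed to discard the last option, so both subalgebras must be of type $3\A$ or $3\C$. The orbit-coincidence argument then places $c$ in $a^D = c^D$ of size $3$, and the three Miyamoto involutions attached to the three axes in this orbit are the three distinct reflections of $S_3$, forcing $c = a$.

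The only slightly delicate step, and the place where the no-$6\A$ hypothesis does real work, is the case analysis of which Norton-Sakuma types Lemma~\ref{dihedralorbs} permits for a given order of $\tau_a \tau_b$ in Part~2; beyond that, everything reduces to counting reflections in a small dihedral group.
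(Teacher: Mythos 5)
Your proposal is correct and follows essentially the same route as the paper's proof: assume $\tau_c=\tau_a$, observe $D_{a,b}=D_{c,b}$, use Lemma \ref{dihedralorbs} (and, in the order-$3$ case, the absence of $6\A$) to force both subalgebras to be of type $5\A$, respectively $3\A$ or $3\C$, so that $a$, $b$, $c$ lie in a single $D$-orbit, and finish by noting that the axes in that orbit carry pairwise distinct Miyamoto involutions. The only cosmetic difference is that you phrase the final step as counting reflections in the dihedral group, where the paper cites a direct computation in the $5\A$ (resp. $3\A$, $3\C$) algebra.
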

\begin{proof}
Suppose $f \in X$ such that $\tau_f = \tau_f = u$.  Then $D_{d,e} = D_{f,e} =: D$.  If the order of $\tau_d \tau_e$ is $5$, then $\lla d, e \rra$ and $\lla f, e \rra$ must both be $5\A$ algebras.  Since these only have one orbit of axes under the action of $D$, $d$, $e$ and $f$ all lie in the same orbit of $D$ and so $\lla d, e \rra = \lla f, e \rra$.  A simple computation in the $5\A$ algebra shows that all the five axes have distinct Miyamoto involutions.

Suppose that the order of $\tau_d \tau_e$ is $3$.  Since by assumption $A$ contains no $6\A$ subalgebras, $\lla d, e \rra$ and $\lla f, e \rra$ are algebras of type $3\A$, or $3\C$.  These only have one orbit of axes under $D$ so, as above, the subalgebras are equal and again the involutions are distinct.

In both cases, $f= d$ and so $\tau_d$ is unique.
\end{proof}

\begin{corollary} \label{O2unique}
If $A$ has no $6\A$ subalgebras and the Miyamoto involution $u$ does not lie in $O_2(G)$, then $u$ is unique.
\end{corollary}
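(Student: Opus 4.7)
The plan is to argue the contrapositive, via the Baer--Suzuki theorem. Write $x = \tau_a$ for some $a \in X$ and suppose that $\tau_a$ does not have the uniqueness property. Combining this with the hypothesis that $A$ has no $6\A$ subalgebras, Lemma \ref{uniqueprop} rules out the existence of any $b \in X$ with $\tau_a \tau_b$ of order $3$ or $5$. Since $\langle \tau_a, \tau_b \rangle$ is the dihedral group acting on the Norton--Sakuma subalgebra $\langle\langle a, b \rangle\rangle$, the order of $\tau_a \tau_b$ lies in $\{1, 2, 3, 4, 5, 6\}$; excluding $3$ and $5$ by the above, and $6$ by the no-$6\A$ assumption, the order of $\tau_a \tau_b$ must lie in $\{1, 2, 4\}$ for every $b \in X$. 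In particular, $\langle \tau_a, \tau_b \rangle$ is a $2$-group for every $b \in X$.

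I would then translate this into the hypothesis of Baer--Suzuki. For any $g \in G$, Lemma \ref{autaxis} gives $\tau_a^g = \tau_{a^g}$, and $a^g$ lies in $X$ since $X$ is $G$-closed. The previous paragraph therefore shows that $\langle \tau_a, \tau_a^g \rangle$ is a $2$-group for every $g \in G$. The Baer--Suzuki theorem, applied to the involution $\tau_a$ and the prime $2$ in the finite group $G$, then concludes that $\tau_a \in O_2(G)$, contradicting the hypothesis $x \notin O_2(G)$.

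The only point requiring care is the applicability of Baer--Suzuki, which needs $G$ to be finite; in the current setting this is guaranteed by the corollary in Section \ref{sec:groups} stating that every $3$-generated $4$-transposition group is finite. Beyond this there is no substantive obstacle: once one recognises Baer--Suzuki as the right tool, everything reduces to the conjugacy dictionary $\tau_a^g = \tau_{a^g}$ supplied by Lemma \ref{autaxis} and the case analysis already packaged inside Lemma \ref{uniqueprop}.
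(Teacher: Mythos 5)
Your proof is correct and follows essentially the same route as the paper: both hinge on the Baer--Suzuki/Baer theorem applied to the involution $\tau_a$ and its conjugates, combined with Lemma~\ref{uniqueprop} to convert an odd-order product $\tau_a\tau_{a^g}$ into the uniqueness property. You merely run the argument in contrapositive form (all products of order $1$, $2$, or $4$ force $x \in O_2(G)$) where the paper argues directly, and your explicit remark about finiteness of $G$ is a reasonable extra precaution that the paper leaves implicit.
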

\begin{proof}
Suppose that the order of the product $uv$ is even for all $v \in u^G$.  Then, $\la u, v\ra$ is a $2$-group for all $v \in u^G$ and, by Baer's Theorem, $u \in O_2(G)$, a contradiction.  So, there exists some $v \neq u$ in $u^G$ such that the order of $uv$ is odd.  Since these are Miyamoto involutions, this order is either $3$, or $5$.  As we assume there are no $6\A$ subalgebras, by Lemma \ref{uniqueprop}, $u$ is unique.
\end{proof}

What can be said about the stabiliser $G_d$ of an axis when it is not unique?  We introduce a property that is slightly weaker than uniqueness.

\begin{definition}
An axis $d \in X$ is \emph{strong} if $\tau_d \neq \tau_e$ for any $e \in d^G$, $e \neq d$.  We say an involution $u$ is \emph{strong} if there exists a strong axis $d$ with $\tau_d = u$.
\end{definition}

We stress that it is possible that $d$ is a strong axis, but there could be another axis $d'$ with $\tau_{d'} = \tau_d$ which is not strong.  Clearly if an axis $d$, or Miyamoto involution $u = \tau_d$ is unique, then it is strong.  Note also that both properties are preserved by conjugation.  We have the following easy lemma.

\begin{lemma}
Let $d$ be an axis with a non-trivial Miyamoto involution $u = \tau_d$.  Then, the following are equivalent.
\begin{enumerate}
\item $d$ is strong.
\item There is a natural $G$-invariant bijection between $d^G$ and $u^G$.
\item $G_d= C_G(u)$.
\end{enumerate}
\end{lemma}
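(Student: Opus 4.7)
The plan is to establish the circular chain $(1) \Rightarrow (2) \Rightarrow (3) \Rightarrow (1)$, or equivalently show $(1) \Leftrightarrow (2)$ directly and then $(2) \Leftrightarrow (3)$ via an orbit--stabiliser count. The whole proof essentially unwinds Lemma \ref{autaxis} and Lemma \ref{taufix}; the only non-trivial ingredient is the observation that the candidate bijection in $(3)$ is forced to be the natural map $b \mapsto \tau_b$ on the orbit $a^G$.

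First I would record the one-sided inclusion that holds with no hypothesis: by Lemma \ref{taufix}, $\tau_a \in Z(G_a)$, so every $g \in G_a$ centralises $x = \tau_a$, giving $G_a \leq C_G(x)$ unconditionally. This reduces $(2)$ to the reverse inclusion $C_G(x) \leq G_a$.

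For $(1) \Leftrightarrow (2)$, take any $g \in C_G(x)$. By Lemma \ref{autaxis}, $\tau_{a^g} = \tau_a^g = x^g = x = \tau_a$, and of course $a^g \in a^G$. Strength of $a$ says that the only axis in $a^G$ with Miyamoto involution $x$ is $a$ itself, so $a^g = a$, i.e.\ $g \in G_a$; this proves $(1) \Rightarrow (2)$. Conversely, if $G_a = C_G(x)$ and $b = a^g \in a^G$ satisfies $\tau_b = \tau_a$, then $g \in C_G(x) = G_a$, whence $b = a$; this proves $(2) \Rightarrow (1)$.

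For $(2) \Leftrightarrow (3)$, I would exhibit the map $\phi\colon a^G \to x^G$ by $\phi(b) := \tau_b$; this is the only natural candidate, it lands in $x^G$ because $\tau_{a^g} = x^g$, and it is $G$-equivariant by Lemma \ref{autaxis}. It is automatically surjective, since every element of $x^G$ is of the form $x^g = \tau_{a^g}$. Now orbit--stabiliser gives $|a^G| = [G : G_a]$ and $|x^G| = [G : C_G(x)]$, and combined with $G_a \leq C_G(x)$ we see that $\phi$ is a bijection iff $[G : G_a] = [G : C_G(x)]$ iff $G_a = C_G(x)$. Thus $(2) \Leftrightarrow (3)$.

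There is no real obstacle here: the proof is a routine application of Lemmas \ref{autaxis} and \ref{taufix} together with orbit--stabiliser. The only conceptual point worth stating cleanly is that any $G$-invariant bijection between the transitive $G$-sets $a^G$ and $x^G$ sending $a \mapsto x$ must coincide with $b \mapsto \tau_b$, so $(3)$ really is a statement about the natural map rather than an arbitrary one.
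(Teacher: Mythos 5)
Your proof is correct; the paper states this as an ``easy lemma'' and gives no proof at all, and your argument (the unconditional inclusion $G_a \leq C_G(x)$ from Lemma \ref{taufix}, the equivalence $(1)\Leftrightarrow(2)$ via $\tau_{a^g}=\tau_a^g$, and the counting argument for $(2)\Leftrightarrow(3)$) is exactly the evident intended one. The only cosmetic improvement would be to check injectivity of $b\mapsto\tau_b$ directly ($x^g=x^h$ iff $gh^{-1}\in C_G(x)$, versus $a^g=a^h$ iff $gh^{-1}\in G_a$), which avoids any appeal to finiteness in the orbit--stabiliser count, but in the paper's setting everything is finite so this is immaterial.
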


What can be said about the stabiliser $G_d$ of an axis which is not unique, or strong?  As already noted, $\la \tau_d \ra \leq G_d \leq C_G(u)$.  The next lemma allows us to find additional elements of $G_d$.  Note that, if we have no $6\A$ subalgebras and $d$ is not unique, by Lemma \ref{uniqueprop}, the order of $uv$ is $1, 2, 4$ for all other Miyamoto involutions $v$.

\begin{lemma} \label{stronginv}
Let $u = \tau_d \neq 1$ and $v$ be another Miyamoto involution.
\begin{enumerate}
\item If the order of $uv$ is $2$ and $v$ is strong, then $v \in G_d$.
\item If the order of $uv$ is $4$, then $[u,v]=(uv)^2 \in G_d$.
\end{enumerate}
\end{lemma}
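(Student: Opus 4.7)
\medskip

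\noindent\textbf{Proof plan.}

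For part $(1)$, set $D := \langle x, y \rangle$. Since $xy$ has order $2$ and $x \neq y$, $D$ is a Klein four-group. The plan is to leverage strength of $y$ to force $x$ into $G_b$, and then to use admissibility (Lemma \ref{dihedralorbs}) to transfer this to $G_a$. Explicitly, by Lemma \ref{autaxis}, $b^x \in b^G$ and $\tau_{b^x} = \tau_b^x = y^x = y = \tau_b$, using $xy=yx$. Strength of $y$ then forces $b^x = b$, so $x \in G_b$, and hence $|b^D| = 1$. Lemma \ref{dihedralorbs} now gives $|a^D| = |b^D| = 1$, so $y \in G_a$.

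For part $(2)$, the relations $x^2 = y^2 = 1$ give
\[
[x,y] = x^{-1}y^{-1}xy = xyxy = (xy)^2,
\]
so the first equality in the statement is a formality. Since $xy$ has order $4$, $D := \langle x, y \rangle$ is dihedral of order $8$, and $(xy)^2$ is its unique central involution. The plan is to show $(xy)^2 \in D_a$. Setting $k := |a^D|$, Lemma \ref{dihedralorbs} forces $k \in \{1,3,5\} \cup \{1,2,3\}$; combined with $k \mid |D| = 8$, only $k \in \{1,2\}$ survive.

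If $k = 1$ then $D_a = D$ and $(xy)^2 \in G_a$ trivially. If $k = 2$ then $D_a$ is a subgroup of $D$ of order $4$ containing $x$. The one delicate check here is that $x$ cannot be the central involution of $D$: if it were, then $D$ would be abelian and $xy$ would have order at most $2$, contradicting the assumption. Hence $x$ is a non-central reflection of $D_8$, and the only order-$4$ subgroup of $D_8$ containing such a reflection is the Klein four-group $\langle x, (xy)^2 \rangle$. So $D_a = \langle x, (xy)^2 \rangle$ contains $(xy)^2$, giving $(xy)^2 \in G_a$. I expect the main step to watch to be correctly combining the admissibility constraints of Lemma \ref{dihedralorbs} with divisibility by $|D|$ to cut the possible values of $k$ down to $\{1,2\}$; after that the argument is routine group theory inside $D_8$.
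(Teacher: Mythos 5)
Your proposal is correct and takes essentially the same route as the paper: in part 1, strength of $y$ forces $x$ to fix $b$ (the paper phrases this as ruling out $\la\la a,b\ra\ra\cong 4\L$, you phrase it via Lemma \ref{autaxis}, but the core observation is identical), and Lemma \ref{dihedralorbs} then transfers the fixed point to $a$; in part 2 both arguments cut $|a^D|$ down to $2$ and read off $(xy)^2$ from the index-two stabiliser in $D_8$. The only cosmetic difference is that the paper dismisses $k=1$ in part 2 by noting $x$ and $y$ would then commute, whereas you keep it as a trivially harmless case — both are fine.
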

\begin{proof}
Let $D := \la u,v\ra$ and $e \in X$ such that $\tau_e = v$.  Consider the action of $D$ on $B := \lla d,e \rra$, noting that $D$ may not act on $B$ faithfully.

If the order of $uv$ is $2$, then $B$ is either a $2\L$, or a $4\L$ algebra, where $\L$ can be either $\A$, or $\B$.  If in addition $v$ is strong, then we can assume that $e$ is strong and so $e$ is the only axis in its orbit with the Miyamoto involution $v$.  Suppose that $B \cong 4\L$.  Then, $D$ has two orbits of length $2$ on the axes.  In particular, $u=\tau_d$ conjugates $e$ to the other axis $f$ in $e^D$.  However, $u$ and $v$ commute, so $\tau_e = \tau_f = v$, contradicting the assumption that $v$ is strong.  Hence, $B \cong 2\L$ and $v$ fixes $d$.

If the order of $uv$ is $4$, then $u$ and $v$ do not commute and so the $D$-orbit containing $d$ cannot be of length $1$.  So it is of length $2$ or $4$.  Again, looking at the list of Norton-Sakuma algebras, it can never be length $4$, so it must be length $2$.  Hence, $(uv)^2 \in G_d$ (in fact $B \cong 4\L$ and $\la(uv)^2\ra$ is the kernel of the action on $B$).
\end{proof}

\subsection{Limiting the choices of stabilisers}

Let $d \in \{a, b, c\}$ be an axis and $u = \tau_d$.  We use the results of the previous section to find a normal subgroup $H_u$ of $C_G(u)$ which is as large as possible such that $\la u \ra \leq H_u \leq G_d \leq C_G(u)$ giving a lower bound for any possible stabiliser $G_d$.  Note that for any $w \in T$, $w = v^g$ for some $v \in \{x,y,z\}$ and $g \in G$.  So we may define $H_w := H_u^{g^{-1}}$ (this is well-defined since $H_u$ is normal in $C_G(u)$).

\begin{enumerate}
\item If $u = 1$, then by Lemma \ref{identity}, $G_d = G$ and so $H_1 = G$.
\item If $u \notin O_2(G)$, then by Corollary \ref{O2unique} $u$ is unique and so $H_u = C_G(u)$.
\item Otherwise, we initially set $H_u = \la u, (uv)^2 : v \in T \ra$, using part $2$ of Lemma \ref{stronginv}.
\end{enumerate}

At this point, we have some $H_u$ for each $u \in \{x,y,z\}$ (and also their conjugates).  If $H_u = C_G(u)$, then $u$ and its entire class $u^G$ are strong.  Let $S$ be the union of the strong classes in $T$.  If $S = T$, then we are done and we know the stabilisers.  Otherwise, for those $H_u$ which do not equal $C_G(u)$, we iteratively grow $H_u$, using part $1$ of Lemma \ref{stronginv} and Lemma \ref{symmetricorbs}.

\begin{enumerate}
\item[$4.$] We add $S \cap C_G(u)$ to the generators of $H_u$.
\item[$5.$] For $w \in T \cap (C_G(u) - H_u)$, if $u \in H_w$, then we add $w$ to the generators of $H_u$.
\item[$6.$] If now $H_u = C_G(u)$, $u$ is strong.  We add $u^G$ to $S$.
\item[$7.$] If any of $H_x$, $H_y$, or $H_z$ have increased, then we iterate.
\end{enumerate}

By Proposition \ref{numbergroups}, there are $55$ $3$-generated $4$-transposition groups up to similarity.  Of these, thirteen have all three generators being either unique, or the identity, a further seven have two with this property, three have one with this property, while the remaining $32$ have no unique or identity generators.  At the end of the above procedure, $29$ groups have all three generators being strong, $11$ have two strong generators, $4$ have one and only ten groups have no strong generators.  Even in the cases where $H_u \lneqq C_G(u)$, $H_u$ is still quite big.  In fact, the largest index of $H_u$ in $C_G(u)$ is $8$, which occurs just once.

Let us also record the following easy fact which we will need later.

\begin{lemma}
$H_u \unlhd C_G(u)$.
\end{lemma}

This is because at every step we expand $H_u$ by a normal set of involutions.

\subsection{Faithfulness}

We will use the following observation to eliminate some of the cases as the Miyamoto group must act faithfully.

\begin{lemma}\label{faithful}
If $N \leq G_a \cap G_b \cap G_c$ is a normal subgroup of $G$, then $N = 1$.
\end{lemma}
\begin{proof}
For any axis $d \in X$, $d$ is conjugate to one of $a$, $b$ or $c$.  So $N$ is in the stabiliser $G_d$ and hence $N$ is in the kernel of the action.
\end{proof}

We will use this lemma later, however even now we can eliminate some cases without knowing the exact axis stabilisers.

\begin{corollary}
If $1 \neq N \leq H_x \cap H_y \cap H_z$ is a normal subgroup of $G$, then $G$ cannot be a Miyamoto group.
\end{corollary}

In fact this faithfulness condition eliminates 31 of the 55 groups, including the large $2$-groups.  This leaves $24$ groups (with marked generators $x$, $y$ and $z$ and known subgroups $H_x$, $H_y$ and $H_z$) whose orders are 1, 4, 4, 4, 6, 6, 8, 16, 18, 24, 24, 32, 64, 64, 72, 96, 96, 128, 168, 256, 512, 1024, 1152 and 1296.

\subsection{Configurations of axes}

For a given group $G$, we now want to list all possible triples of stabilisers.  We know that the stabiliser $G_d$ of an axis $d \in \{a,b,c\}$ satisfies $H_u \leq G_d \leq C_G(u)$, where $u = \tau_d$.  Recall that $H_u$ is normal in $C_G(u)$, hence $G_d$ corresponds to a subgroup $\bar G_d$ of the factor group $C_G(u)/H_u$.  Since the index $|H_u:C_G(u)| \leq 8$ in all cases, there are few possible options.  However, we may use the next lemma to limit the choices even further.

\begin{lemma}
Suppose $A = \lla a,b,c \rra$.  Let $a' \in a^{C_G(x)}$, $b' \in b^{C_G(y)}$ and $c' \in c^{C_G(z)}$.  Then, $A = \lla a', b' , c' \rra$.
\end{lemma}
\begin{proof}
Clearly $\tau_{a'} = \tau_a = x$ and similarly for $b'$ and $c'$.  Hence the Miyamoto group corresponding to $\{a', b', c'\}$ is $G$ and therefore $\lla a', b', c'\rra$ contains $a,b,c$ and so is equal to $A$.
\end{proof}

This means that we may choose the images $\bar G_d$ up to conjugation in the factor group and this is independent for each axis $a$, $b$ and $c$.  Also, if $u \neq 1$ is central in $G$ then we should discard the possibility of $G_d=G$ because of Lemma \ref{identity}.

For our $24$ remaining groups, we enumerate all possible stabilisers and find $257$ possible cases.  We use Lemma \ref{faithful} to discard $138$ of these, of those remaining a further $20$ are failed by Lemma \ref{identity} and finally Lemma \ref{symmetricorbs} allows us to discard another $56$.  This leaves $43$ cases in total.

The next step is to build the possible actions of $G$ on the set of axes.  For each stabiliser $G_d$, we can easily recover the action of $G$ on the orbit $d^G$; it is just isomorphic to the action of $G$ on the cosets of $G_d$.  However, we cannot assume that $a^G$, $b^G$ and $c^G$  are disjoint, so we must be careful building the possible sets of axes $X$.  If two axes, say $a$ and $b$, have Miyamoto involutions which are conjugate and the axis stabilisers $G_a$ and $G_b$ are also conjugate, then the orbits $a^G$ and $b^G$ have isomorphic $G$-actions. So, a priori, there are two possibilities to combine them.  Either $a^G \cup b^G$ is the disjoint union of $a^G$ and $b^G$, or $a^G \cup b^G = a^G = b^G$.  However, the first option is not valid when either $a$, or $b$ is unique by the definition of uniqueness (note that if either is unique, then they both are).

Define $\tau \colon X \to G$, by $\tau_a = x$, $\tau_b = y$ and $\tau_c = z$ and extend to $X$ using the action of $G$ on $X$.  By definition, we have $\tau_x \in G_x$ and $\tau_{x^g} = \tau_x^g$ for all $x \in X$ and $g \in G$.  So $\tau$ is well-defined.  We also have the following.

\begin{lemma}
Let $d,e \in X$ and define $D = \la \tau_d, \tau_e \ra$.  Then $k := |d^D| = |e^D|$ and moreover
\begin{enumerate}
\item If $d$ and $e$ are in the same orbit, then $k = 1$, or $3$.
\item If $d$ and $e$ are in different orbits, then $k = 1$, or $2$.
\end{enumerate}
\end{lemma}
\begin{proof}
Since $G$ is a $4$-transposition group, $n := |\tau_d\tau_e|$ is at most $4$ and so $D$ is a dihedral group of order $2n$.  Since $\tau_d \in G_d$, $|d^D|$ has size at most $n \leq 4$.  However, by part $2$ of Lemma \ref{stronginv}, if $|\tau_d\tau_e| = 4$, then $(\tau_d\tau_e)^2 \in G_d$.  So $D \cap G_d$ has order at least $4$ and so $d^D$ has order at most $2$.  In particular, in all cases $|d^D| \leq 3$.

Now suppose that $d^D = e^D$. If $d=e$, then $d^D = e^D = \{d\}$ and $k=1$.  So assume that $d \neq e$.  Then $|d^D|$ cannot have size $2$.  Indeed, suppose $d^D = \{ d, e \}$.  Then $\tau_d$ fixes $d$ and so must fix $e$ also.  Similarly for $\tau_e$ and hence $D$ acts trivially on $\{d,e\}$, a contradiction.

Finally, suppose that $d^D$ and $e^D$ are different orbits and so clearly $d \neq e$.  First assume that one of the orbits has length $3$, say $d^D$.  Then $|\tau_d\tau_e|$ must have order $3$ and so $\tau_d$ and $\tau_e$ are not in $O_2(G)$.  By Corollary \ref{O2unique} such axes must be unique if we do not have $6\A$ subalgebras.  Since we took this into account when building $X$, $\tau_d$ and $\tau_e$ are indeed unique.  However, since $\tau_{d^g} = \tau_d^g$ for $g \in D$ and $D=S_3$ has three involutions all in one orbit, there exists $f \in a^D$ such that $\tau_f = \tau_e$, contradicting the uniqueness of $\tau_e$.

It remains to consider the case where there are two distinct orbits of length at most $2$.  For a contradiction, suppose that $|d^D| = 1$ and $|e^D| = 2$.  So $\tau_e \in G_d$.  This situation is discussed in Lemma \ref{symmetricorbs} and since we discarded the cases of stabilisers which do not satisfy the lemma, we have $\tau_d \in G_e$.  So $|e^D|=1$, a contradiction.
\end{proof}

Comparing this to Lemma \ref{dihedralorbs}, we see that this is exactly what a $\tau$-map satisfies in axial algebras of Monster type.  Moreover, since the $5\A$ algebra is the only one with an orbit of length $5$ and the $6\A$ algebra is the only one with two disjoint orbits each of length $3$, we see that any algebra on $X$ with $\tau$-map $\tau$ could contain no $5\A$, or $6\A$ subalgebras.  Hence it would be a $4$-algebra.

Finally, we may remove from our list of possible actions of $G$ on $X$ those which are in fact $2$-generated as these give precisely the known Norton-Sakuma algebras.  After removing some isomorphic actions, we get a total of $31$ different cases to check.


\section{Forbidden configurations of axes}\label{sec:forbidden}

Even though we have only $31$ actions to check, the number of possible shapes for each action can be very large (see Table \ref{tab:0dim}).  Indeed, the total number of possible shapes on our $31$ actions is $11, 257$ (found using routines in the magma package \cite{ParAxlAlg}).  This is too large a number to naively do each case in turn using the expansion algorithm.  Instead we use a different approach.

Suppose that $Y \subseteq X$ is a closed subset of axes; that is it is closed under the restricted Miyamoto group $G(Y)$.  Then any shape on $X$ restricts to a shape on $Y$.  If we know that the shape on $Y$ collapses, then the shape on $X$ must also collapse and we don't need to run the algorithm on this case.

So we use the following strategy: we find a short list of collapsing shapes on small closed sets $Y$.  Then for larger sets of axes $X$, we search for closed subsets $Y$ from this list and we discard those shapes on $X$ which restrict to collapsing shapes on $Y$.  The list we found has $18$ such collapsing shapes and, as we see below, this is astonishingly powerful in reducing the number of cases to consider.

In Table \ref{tab:min0dims} we list our $18$ forbidden shapes, which we show collapse using the expansion algorithm.  Each of them is $3$-generated (and so appears as one of our $11,257$ shapes) and they are all minimal with respect to being collapsing and being properly contained in some shape.  In Table \ref{tab:0dim} we list the number of times they appear as a subshape of a larger set of axes (including themselves).  In total, of the $11, 257$ cases we had to consider, $11, 134$ contain one of these $18$ minimal collapsing configurations, leaving just $123$ to compute individually using the expansion algorithm.  These are the only shapes which can lead to non-trivial algebras, although some of these may still collapse in the end.

\setlength{\tabcolsep}{8pt}
\begin{tabularx}{\textwidth}{cccY}
\hline
$G$ & axes & shape & number of shapes where it is a subshape\\
\hline
$2^2$ & 2+2+2 & $4\A \, 2\A \, 2\B$ & 9937\\
$2^2$ & 2+2+2 & $4\B \, (2\B)^2$ & 5604\\
\\
$S_3$ & 1+3 & $3\C\, 2\A$ & 27\\
\\
$2^3$ & 2+2+4 & $(4\A)^2 \, (2\A)^2 \, 2\B$ & 3701\\
$2^3$ & 2+2+4 & $4\A \, 4\B \, (2\A)^3$ & 796\\
$2^3$ & 2+2+4 & $(4\B)^2 \, (2\A)^3$ & 4432\\
$2^3$ & 2+2+4 & $(4\B)^2 \, 2\A \, 2\B \, 2\A$ & 6836\\
$2^3$ & 2+2+4 & $(4\B)^2 \, 2\A \, (2\B)^2$ & 6634\\
$2^3$ & 2+2+4 & $(4\B)^2 \, (2\B)^2 \, 2\A$ & 3701\\
\\
$2^3$ & 4+4+4 & $(4\B)^3 \, (2\A)^3$ & 1490\\
\\
$S_4$ & 3+6 & $4\A \, 3\C \, 2\A$ & 5\\
\\
$2^2 \wr 2$ & 4+4+4 & $(4\A)^3 \, 2\B \, 2\A$ & 2685\\
$2^2 \wr 2$ & 4+4+4 & $(4\A)^3 \, (2\B)^2$ & 2292\\
\\
$2 \wr 2^2$ & 4+4+8 & $4\B \, (4\A)^3 \, 2\A$ & 591\\
$2 \wr 2^2$ & 4+4+8 & $4\B \, (4\A)^2 \, 4\B \, 2\A$ & 676\\
$2 \wr 2^2$ & 4+4+8 & $4\B \, 4\A \, (4\B)^2 \, 2\A$ & 571\\
\\
$S_3 \wr 2$ & 6+6 & $4\A \, 3\A \, 3\C$ & 20\\
$S_3 \wr 2$ & 6+6 & $4\A \, (3\C)^2$ & 13\\
\hline
\\
\caption{Minimal collapsing shapes}\label{tab:min0dims}
\end{tabularx}

\subsection{Why do shapes collapse?}

What we observe is a very interesting phenomenon: almost all shapes collapse!  When axial algebras were first being computed for concrete groups, typically each shape led to a  non-trivial algebra.  However, it was first noticed in \cite{sanhan}, that many of the possible shapes collapse and that was the beginning of this project.

This phenomenon drastically changes our point of view and begs the question: why do shapes collapse?  This cannot be answered without understanding this problem theoretically.  That is by providing handmade proofs.

Here we give short proofs for two of the $18$ forbidden shapes.  The first proof involves the shape which appears second in Table \ref{tab:0dim}.

\begin{proposition}\label{4B2B}
Let $X := \{a,b,c,d,e,f\}$ be a putative set of six axes for the group $2^2$ with $\tau_x = (c,d)(e,f)$, for $x \in \{a,b\}$ and $\tau_x = (a,b)$ for $x \in \{c,d,e,f\}$.  If
\begin{enumerate}
\item $\lla a,c \rra \cong \lla a,e \rra \cong 4\B$ and
\item $\lla c, e \rra \cong \lla c,f \rra \cong 2\B$,
\end{enumerate}
then the shape collapses.
\end{proposition}
\begin{proof}
Consider a $4\B$ algebra $\la a_{-1}, a_0, a_1, a_2, v_{\rho^2} \ra$ as described in Table \ref{tab:sakuma}.  There are two orbits of axes and one see from the table that two axes in the same orbit generate a $2\A$ subalgebra whose third axis is $v_{\rho^2}$.  

Let $v_1$, respectively $v_2$, be the additional element $v_{\rho^2}$ of the $4\B$ algebra $\lla a,c \rra$, respectively $\lla a,e \rra$.  On the one hand, since both $a$ and $b$ are contained in the intersection $\lla a,c \rra \cap \lla a,e \rra$ and since $\lla a,b \rra$ is a $2\A$ subalgebra of each, we see that $v_1 = v_2$.  On the other hand, $v_2$ is the third axis in the other $2\A$ subalgebra $\lla e,f \rra$ contained in $\lla a, e \rra$.  Since $\lla c, e \rra$ and $\lla c, f \rra$ are $2\B$ subalgebras, $ce = cf = 0$.  However, from the fusion law given in Table \ref{tab:monsterfusion}, we see that $A_0(c)$ is a subalgebra. Hence, $cv_2 = 0$ also.  But $v_1$ is the third axis in the $2\A$-subalgebra $\lla c,d \rra$ and so $cv_1 \neq 0$, a contradiction as $v_1 = v_2$.
\end{proof}

As we see from the table, this case eliminates about half of all the cases where the Miyamoto group is a $2$-group.

The second forbidden shape that we provide a proof for is the third in the table and the proof arises as part of a more general algebra identification result.

\begin{proposition} \label{non-existence}
Let $A = \lla a,b,c \rra$ be a $3$-generated axial algebra of Monster type where $\lla a,b \rra \cong 2\L$, $\lla a,c \rra \cong 2\L'$, where $\L, \L', \in \{ \A, \B \}$, and $\lla b, c \rra$ is one of $3\A$, $3\C$, or $5\A$.  Then,
\begin{enumerate}
\item[$1.$] $L = L'$.
\item[$2.$] If $\lla a,b \rra \cong \lla a,c \rra \cong 2\A$, then $\lla b, c \rra \cong 3\A$ and $A \cong 6\A$.
\item[$3.$] If $\lla a,b \rra \cong \lla a,c \rra \cong 2\B$, then $A \cong \lla a \rra \oplus \lla b, c \rra$.
\end{enumerate}
\end{proposition}
\begin{proof}
First, note that the Miyamoto involution $\tau_a$ fixes $a$, $b$ and $c$, and hence it is trivial. Therefore, the Miyamoto group of $A$ is $G = \la \tau_b,\tau_c \ra$, which is isomorphic to either $S_3$ or $D_{10}$. In both cases, the group conjugates $b$ to $c$ while fixing $a$.  Hence, $\lla a,b \rra \cong \lla a,c \rra$ and so $L = L'$.

If $2\L=2\B$ then, since $A_0(a)$ is a subalgebra, we see that $a\lla b,c\rra = 0$ which means that $A$ is isomorphic to the direct sum $1\A \oplus 3\A$, $1\A \oplus 3\C$ or $1\A \oplus 5\A$.

Suppose that $2\L = 2\A$.  We may pick a basis $\{a,b,b'\}$ of $B=\lla a,b\rra$, where $b'$ is the extra axis in $B$.  Since $\tau_a$ acts trivially on $A$, $A_{\frac{1}{32}}$ is trivial.  So, with respect to $a$, $A$ is still $\mathbb{Z}_2$-graded but with the grading $A_+(a) = A_1(a)\oplus A_0(a)$ and $A_-(a) = A_{\frac{1}{4}}(a)$.  We denote the associated involution that negates $A_-(a)$ by $\sigma_a$ to distinguish it from $\tau_a$.  By a calculation in the $2\A$ algebra $B$, $b'=b^{\sg_a}$.  Hence $b'$ is also an axis in $A$.

We claim that $C=\lla b',c\rra = A$. Note that $\tau_b$ fixes $a$ and so, by Lemma \ref{autaxis}, $\sg_a^{\tau_b}=\sg_{a^{\tau_b}}=\sg_a$ and $\tau_b$ and $\sg_a$ commute.  However, $\tau_{b'}=\tau_{b^{\sg_a}}=\tau_b^{\sg_a}=\tau_b$ and hence $\la\tau_{b'},\tau_c\ra=\la\tau_b,\tau_c\ra=G$.  By Lemma \ref{dihedralorbs}, all the axes of $\lla b, c \rra$ under $G$ are in one orbit.  Hence, $C$, which is invariant under $\la\tau_{b'},\tau_c\ra = G$ also contains $b$.  Furthermore, since $C$ contains $b$ and $b'$, it must contain the whole of $B=\lla b,b'\rra$, and so it contains $a$.  Therefore, $C=A$, as claimed.

In particular, $A$ is generated by two axes, and so it must be one of the Norton-Sakuma algebras. Since $G=\la\tau_{b'},\tau_c\ra$ does not conjugate $b'$ to $c$, the algebra $A$ can only be of type $6\A$. It is well known that this algebra contains the algebra $3\A$ and not $3\C$ or $5\A$. Hence $\lla b,c\rra$ is of type $3\A$.
\end{proof}

\begin{corollary}\label{no3C2A}
There exists no $3$-generated axial algebra of Monster type of shape $3\C2\A$, or $5\A2\A$.
\end{corollary}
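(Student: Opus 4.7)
The plan is to derive both non-existence results as immediate consequences of Theorem~\ref{non-existence}(2).

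Suppose, for contradiction, that $A = \la\la a, b, c\ra\ra$ is a $3$-generated axial algebra of Monster type with shape $3\C2\A$. After relabeling the generators if necessary, we may take $\la\la b, c \ra\ra$ to be the $3\C$ subalgebra. Since the shape names only the two types $3\C$ and $2\A$, and since the label $2\A$ must account for the remaining two pairs $\{a,b\}$ and $\{a,c\}$, we have $\la\la a,b\ra\ra \cong \la\la a,c\ra\ra \cong 2\A$.

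Now apply Theorem~\ref{non-existence}(2) with $\L = \L' = \A$: it asserts that under precisely these hypotheses, $\la\la b,c\ra\ra$ must in fact be of type $3\A$ (and moreover $A \cong 6\A$). This contradicts $\la\la b,c\ra\ra \cong 3\C$, so the shape $3\C2\A$ cannot arise. The case of shape $5\A2\A$ is entirely analogous: taking $\la\la b,c\ra\ra \cong 5\A$ and $\la\la a,b\ra\ra \cong \la\la a,c\ra\ra \cong 2\A$ (as a $5\A$ subalgebra is not permitted in any other position of the shape), Theorem~\ref{non-existence}(2) again forces $\la\la b,c\ra\ra \cong 3\A$, which is incompatible with the original assumption.

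There is no substantive obstacle; the corollary is a direct reading-off of part~(2) of the theorem in the two shapes where the pair generating the $3$- or $5$-subalgebra is stipulated to be of type $3\C$ or $5\A$ rather than $3\A$. The only mild care needed is in interpreting the shape name: the notation $3\C2\A$ (resp.\ $5\A2\A$) uniquely describes the Norton-Sakuma types realised by the three pairs of generators, since any other allocation would either produce a different shape name or force two generators to coincide (corresponding to a $1\A$-pair), contradicting that the algebra is genuinely $3$-generated with the stated shape.
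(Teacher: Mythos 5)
Your proposal is correct and matches the paper's (implicit) argument: the corollary is stated immediately after Theorem~\ref{non-existence} with no separate proof precisely because part~(2) of that theorem forces $\la\la b,c\ra\ra \cong 3\A$ whenever both $2$-generated subalgebras involving $a$ are of type $2\A$, which rules out both shapes. Your care in unpacking the shape notation is fine but adds nothing beyond what the paper takes as read.
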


The first of these two shapes is the third in our table and it eliminates roughly a quarter of the cases where the group is not a $2$-group.

Clearly proving more results of this kind would be very interesting as would formulating conditions on the shapes that eliminate a majority of collapsing shapes without excluding shapes which lead to good algebras.


\section{The remaining cases}\label{sec:results}

From Section \ref{sec:forbidden}, we know that there are only $123$ shapes left to consider out of a total of $11,257$.  For these, we use the expansion algorithm to try to build an algebra.  For $22$ shapes, the algorithm succeeded by collapsing the algebra.  We did not include these in the earlier table, since they do not glue properly into any other shape.  For the remaining $101$ shapes, we display a summary of the results in Table \ref{tab:0dim} and we show the shapes case by case in Table \ref{tab:results}.  We find $45$ non-trivial algebras and have $56$ cases left which the algorithm couldn't complete.

\begin{table}[!htb]
\renewcommand{\arraystretch}{1.1}
\centering
\begin{tabular}{ccC{1.8 cm}C{1.8 cm}C{1.8 cm}C{1.9 cm}}
\hline
Group & Axes & Number of Shapes & Collapsing shapes & Shapes giving a non-trivial algebra & Incomplete shapes\\
\hline
$1$ & $1+1+1$ & 4 & 0 & 4 & \\
$2^2$ & $1+2+2$ & 6 & 0 & 6 & \\
$2^2$ & $2+2+2$ & 2 & 0 & 1 & 1\\
$2^2$ & $2+2+2$ & 6 & 2 & 3 & 1\\
$S_3$ & $1+3$ & 4 & 1 & 3 & \\
$2^3$ & $2+2+4$ & 4 & 4 & 0 & \\
$2^3$ & $2+2+4$ & 18 & 12 & 4 & 2\\
$2^3$ & $2+4+4$ & 12 & 11 & 1 & \\
$2^3$ & $4+4+4$ & 20 & 16 & 1 & 3\\
$2 \times D_8$ & $4+4+8$ & 80 & 80 & 0 & \\
$3^2:2$ & $9$ & 5 & 1 & 2 & 2 \\
$S_4$ & $6$ & 4 & 0 & 4 & \\
$S_4$ & $3+6$ & 8 & 1 & 7 & \\
$2^2\wr 2$ & $4+4+4$ & 24 & 18 & 0 & 6 \\
$2^2\wr 2$ & $4+8+8$ & 216 & 214 & 0 & 2 \\
$2\wr 2^2$ & $4+4+8$ & 24 & 21 & 0 & 3 \\
$2\wr 2^2$ & $8+8+8$ & 288 & 284 & 0 & 4 \\
$2.4:D_8$ & $8+8+8$ & 364 & 357 & 0 & 7\\
$S_3 \wr 2$ & $6+6$ & 6 & 4 & 0 & 2 \\
$4^2:S_3$ & $12$ & 4 & 1 & 2 & 1 \\
$2^2:S_4$ & $6+12$ & 16 & 12 & 1 & 3 \\
$2^2:S_4$ & $12+12$ & 16 & 13 & 3 &  \\
$2^4.2^3$ & $8+8+16$ & 1560 & 1558 & 0 & 2 \\
$PSL(2,7)$ & $21$ & 4 & 0 & 3 & 1 \\
$2^5.2^3$ & $8+16+16$ & 2520 & 2514 & 0 & 6 \\
$2^5.2^3$ & $16+16+16$ & 1540 & 1535 & 0 & 5 \\
$2^5.D_8.2$ & $16+16+32$ & 2520 & 2520 & 0 &  \\
$2^4.2^4.2^2$ & $16+32+32$ & 1560 & 1560 & 0 &  \\
$2^4.2^4.2^2$ & $32+32+32$ & 364 & 363 & 0 & 1 \\
$A_4^2.D_8$ & $12+24$ & 32 & 30 & 0 & 2 \\
$3^4.D_8:2$ & $18+18+18$ & 26 & 24 & 0 & 2 \\
\hline
\end{tabular}
\caption{Summary}\label{tab:0dim}
\end{table}

There could be several reasons for the incomplete cases.  The expansion algorithm, if successful, builds the universal cover of the algebra for that shape over $\mathbb{Q}$.  However it will only complete if this is finite dimensional.  For some of the shapes this may not be the case.  Indeed, in \cite{maddyinfinite} Whybrow shows that the case of $G = 2^2$ acting on $2+2+2$ axes with shape $4\A$ completes to a $12$-dimensional algebra over the function field $\mathbb{Q}(t)$.  Each specialisation of $t$ to a value in $\mathbb{Q}$ gives a non-isomorphic algebra, so the cover over $\mathbb{Q}$ is infinite dimensional.  The second author together with Peacock has found different behaviour with $G = 2^2$ acting on $2+2+2$ axes with shape $4\A (2\A)^2$.  They used the expansion algorithm together with some additional code to show that, over a function field $\mathbb{Q}(t)$, the algebra collapses for all values of $t$ except $t=\frac{1}{128}$, when there is a $10$-dimensional algebra, and $t= -\frac{1}{128}$, where the algorithm doesn't complete.  So the universal cover is potentially also infinite dimensional in this case. This shows that even for very small groups, there can be quite complicated behaviour.

For some of the cases, the algorithm, if run for long enough, might complete but the magma implementation runs out of memory.  Note that this doesn't necessarily depend on the size of the group, it depends more on the number of expansions needed.  For example, for the Miyamoto group $G = 2^2$ acting on $2+2+2$ axes of shape $4\A (2\A)^2$, we ended up with a partial algebra of dimensional $24, 749$ and would need to expand from there to about $300$ million dimensions in order to find more relations, which is currently not practical. We hope that subsequent theoretical improvements to the expansion algorithm \cite{axialconstruction} will lead to more cases being completed.

We describe all the non-trivial algebras and cases where the algorithm could not complete in Table \ref{tab:results}.  The columns in the table are
\begin{itemize}
\item Miyamoto group.
\item Axes, where we give the size decomposed into the sum of orbit lengths.
\item Shape.
\item Dimension of the algebra.  A question mark indicates that our algorithm did not complete.
\item The minimal $m$ for which $A$ is $m$-closed.  Recall that an axial algebra is $m$-closed if it is spanned by products of length at most $m$ in the axes.
\item Whether the algebra has a Frobenius form that is non-zero on the set of axes $X$.  If it is additionally positive definite or positive semi-definite, we mark this with a pos, or semi, respectively.
\end{itemize}

We now comment on some of the results in the table.  All the non-trivial algebras constructed admit a Frobenius form.  This supports a conjecture in \cite{axialconstruction}.  Furthermore, we note that all the forms are positive semi-definite, with the vast majority being positive definite.  In the two cases where the form is semi-definite, the radical of the form is an ideal \cite{axialstructure}, so we may quotient by this ideal to get another axial algebra which has a positive definite Frobenius form.  In both cases, the radical of the form is $3$-dimensional and hence there is an $11$- and $13$-dimensional axial algebra, respectively.  Note that all Norton-Sakuma algebras except for $2\B$ are simple \cite{axialstructure}.  Hence, as no axes are contained in the radical, the two new quotients have the same shape as before.  Also, all the non-trivial algebras constructed are primitive, except for the trivial group of shape $(2\A)^3$, where the universal cover would have the $1$-eigenspace being at least $3$-dimensional.

The table contains one shape that the computer wasn't able to finish, the first line of the table, but we include it there as we have a handmade proof of the claim.  For the definition of a Matsuo algebra for a given $3$-transposition group, see \cite{Axial2}.

\begin{proposition}\label{2A^3}
Let $A$ be a $3$-generated primitive axial algebra of Monster type with trivial Miyamoto group $G$ and shape $(2\A)^3$.  Then, $A$ is in fact an axial algebra of Jordan type $\frac{1}{4}$.  Moreover, $A$ is isomorphic to one of the following:
\begin{enumerate}
\item[$1.$] The Matsuo algebra of dimension $3$ for the group $S_3$,
\item[$2.$] The Matsuo algebra of dimension $6$ for the group $S_4$,
\item[$3.$] The Matsuo algebra of dimension $9$ for the group $3:S_3$.
\end{enumerate}
\end{proposition}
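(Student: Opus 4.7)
The plan is first to show that $A$ is in fact an axial algebra of Jordan type $\tfrac{1}{4}$, and then to identify it with the Matsuo algebra for the $3$-transposition group generated by the Jordan flip involutions.

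Since the Miyamoto group $G = \la \tau_a, \tau_b, \tau_c\ra$ is trivial, each Miyamoto involution is the identity. By definition $\tau_d$ acts as $-1$ on $A_{1/32}(d)$, so (in characteristic not $2$) $\tau_d = 1$ forces $A_{1/32}(d) = 0$ for every $d \in \{a,b,c\}$. Hence each generating axis has spectrum in $\{1, 0, \tfrac{1}{4}\}$, and on this subset the Monster fusion law (Table~\ref{tab:monsterfusion}) restricts exactly to the Jordan type $\tfrac{1}{4}$ fusion law. Thus $A$ is an axial algebra of Jordan type $\tfrac{1}{4}$ on the axes $\{a,b,c\}$.

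The Jordan fusion law is itself $\mathbb{Z}_2$-graded with $\{1, 0\}$ even and $\{\tfrac{1}{4}\}$ odd, so each axis $d$ produces an algebra automorphism $\sigma_d$ of $A$ fixing $A_1(d) \oplus A_0(d)$ and negating $A_{1/4}(d)$. Since each pair $\{d,e\} \subset \{a,b,c\}$ generates a $2\A$ subalgebra --- i.e.\ the $3$-dimensional Matsuo algebra for $S_3$ --- we have $(\sigma_d\sigma_e)^3 = 1$ for all three pairs. Set $H := \la \sigma_a, \sigma_b, \sigma_c \ra$. Applying Lemma~\ref{autaxis} to the Jordan $\mathbb{Z}_2$-grading, every $H$-conjugate of $a$, $b$ or $c$ is again a Jordan axis of $A$ whose flip involution is the corresponding conjugate of $\sigma_a$, $\sigma_b$ or $\sigma_c$. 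Any two such axes generate a $2$-generated axial algebra of Jordan type $\tfrac{1}{4}$ which, by Sakuma's theorem restricted to the Jordan setting, is either $1\A$ or the Matsuo $S_3$-algebra; hence every pair of involutions in the $H$-conjugacy closure of $\{\sigma_a,\sigma_b,\sigma_c\}$ has product of order $1$, $2$ or $3$. So $H$ is a $3$-generated $3$-transposition group.

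Now, the $3$-generated $3$-transposition groups in which each of the three generators has pairwise order-$3$ product are precisely the finite $3$-transposition quotients of the affine Coxeter group
\[
\tilde A_2 = \la x,y,z \mid x^2 = y^2 = z^2 = (xy)^3 = (xz)^3 = (yz)^3 = 1\ra,
\]
and a direct enumeration yields exactly $S_3$, $S_4$ and $3{:}S_3$ (of order $18$). For each possibility, the associated Matsuo algebra has dimension equal to the number of transpositions in $H$, giving dimensions $3$, $6$ and $9$ respectively, and each such algebra is easily checked to be a $3$-generated axial algebra of Monster type with trivial Miyamoto group and shape $(2\A)^3$. By the standard theory of primitive axial algebras of Jordan type $\tfrac{1}{4}$ attached to a Fischer space, $A$ must then be a quotient of the Matsuo algebra for $H$, and since each of the three Matsuo algebras above is simple (their Frobenius forms being non-degenerate in characteristic zero), $A$ is isomorphic to exactly one of them.

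The main obstacle is the middle step: confirming that $H$ really is a $3$-transposition group requires ruling out larger Norton-Sakuma subalgebras among pairs of Jordan axes of $A$, and this uses both the absence of a $\tfrac{1}{32}$-eigenspace (from Step~$1$) and the shape hypothesis. Once $H$ has been pinned down to one of the three groups listed, matching $A$ with the correct Matsuo algebra is a routine application of the Jordan type classification.
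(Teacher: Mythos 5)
Your proof is correct in outline and reaches the right conclusion, and its first and last steps coincide with the paper's: triviality of the Miyamoto group kills the $\frac{1}{32}$-eigenspaces, so $A$ is of Jordan type $\frac14$, and one finishes by invoking the Hall--Rehren--Shpectorov classification of primitive axial algebras of Jordan type as (quotients of) Matsuo algebras. Where you diverge is in how the $3$-transposition group $H=\la\sg_a,\sg_b,\sg_c\ra$ is pinned down. The paper works concretely inside the algebra: it introduces the third axes $a'$, $b'$, $c'$ of the three $2\A$ subalgebras, computes how each $\sg$ permutes them, and reduces everything to the single parameter $\la\la a,a'\ra\ra\in\{1\A,2\B,2\A\}$, i.e.\ to the order ($1$, $2$ or $3$) of $\sg_a\sg_{a'}$; this yields explicit presentations giving $S_3$, $S_4$ and $3^2{:}S_3$. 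You instead argue abstractly that $H$ is a $3$-transposition quotient of the affine Coxeter group $\tilde A_2$ and appeal to an enumeration of such quotients. Your route is cleaner in that it avoids the bookkeeping with $a'$, $b'$, $c'$, but the paper's trichotomy buys an exact determination of $H$ together with the size of the orbit on axes, which is what one actually needs to name the Fischer space.

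There is one concrete inaccuracy in your version: the claim that the finite $3$-transposition quotients of $\tilde A_2$ (with generators retaining pairwise order-$3$ products) are \emph{exactly} $S_3$, $S_4$ and $3{:}S_3$ is false. The group $3^2{:}S_3$ of order $54$ (the quotient of $\tilde A_2$ by three times the root lattice) is also a $3$-transposition group of this kind --- indeed it is precisely the group $H$ that the paper's proof produces in its third case. This slip does not affect your conclusion, because $3^2{:}S_3$ has the same nine transpositions and hence the same Fischer space $\mathrm{AG}(2,3)$ and the same $9$-dimensional Matsuo algebra as its quotient $3{:}S_3$; but as stated your ``direct enumeration'' is wrong, and the argument should either include the order-$54$ group and then pass to the Fischer space, or (as the paper does) explicitly take quotients of the candidate groups before matching them with Matsuo algebras.
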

\begin{proof}
Since the Miyamoto group $G$ (with respect to the Monster fusion law) is the trivial group, each axis has trivial $\frac{1}{32}$-eigenspace.  So we may restrict the fusion law to the Jordan fusion law of type $\frac{1}{4}$ (this is the law obtained from the Monster fusion law by removing the last column and last row) and hence $A$ is an axial algebra of Jordan type $\frac{1}{4}$.  By \cite[Theorem 5.4 (b)]{Axial2}, such an algebra is a (quotient of a) Matsuo algebra defined in terms of a $3$-generated $3$-transposition group.  These are all known and thus we obtain the list above.
\end{proof}

We note that the first case in the above lemma is in fact the Norton-Sakuma algebra $2\A$ and it is generated by any two of the three generators.  Hence we don't include this algebra in the table.

\setlength{\tabcolsep}{8pt}
\begin{longtable}{cccccc}
\hline
$G$ & axes & shape & dim & $m$ & form \\
\hline

$1$ & 1+1+1 & $(2\A)^3$ & 6,9 & 2,3 & pos \\
$1$ & 1+1+1 & $(2\A)^2 \, 2\B$ & 6 & 3 & pos \\
$1$ & 1+1+1 & $2\A \, (2\B)^2$ & 4 & 2 & pos \\
$1$ & 1+1+1 & $(2\B)^3$ & 3 & 1 & pos \\
\\
$2^2$ & 1+2+2 & $4\A \, (2\A)^2$ & 14 & 3 & semi \\
$2^2$ & 1+2+2 & $4\A \, 2\A \, 2\B$ & 10 & 3 & pos \\
$2^2$ & 1+2+2 & $4\A \, (2\B)^2$ & 6 & 2 & pos \\
$2^2$ & 1+2+2 & $4\B \, (2\A)^2$ & 5 & 1 & pos \\
$2^2$ & 1+2+2 & $4\B \, 2\A \, 2\B$ & 8 & 2 & pos \\
$2^2$ & 1+2+2 & $4\B \, (2\B)^2$ & 6 & 2 & pos \\
\\
$2^2$ & 2+2+2 & $4\A$ & ? &  & \\
$2^2$ & 2+2+2 & $4\B$ & 7 & 2 & pos \\
\\
$2^2$ & 2+2+2 & $4\A \, (2\A)^2$ & ? &  & \\
$2^2$ & 2+2+2 & $4\A \, (2\B)^2$ & 9 & 3 & pos \\
$2^2$ & 2+2+2 & $4\B \, (2\A)^2$ & 11 & 2 & pos \\
$2^2$ & 2+2+2 & $4\B \, 2\A \, 2\B$ & 8 & 2 & pos \\
\\
$S_3$ & 1+3 & $3\A \, 2\A$ & 8 & 2 & pos \\
$S_3$ & 1+3 & $3\A \, 2\B$ & 5 & 2 & pos \\
$S_3$ & 1+3 & $3\C \, 2\B$ & 4 & 1 & pos \\
\\
$2^3$ & 2+2+4 & $(4\A)^2  \, 2\A  \, (2\B)^2$ & ?  &  & \\
$2^3$ & 2+2+4 & $(4\A)^2  \, (2\B)^3$ & 13 & 3 & pos \\
$2^3$ & 2+2+4 & $4\A \, 4\B \,  2\A \, 2\B \,  2\A$ & 15 & 3 & pos \\
$2^3$ & 2+2+4 & $4\A \, 4\B \,  (2\B)^2 \, 2\A$ & 12 & 2 & pos \\
$2^3$ & 2+2+4 & $(4\B)^2  \, (2\A)^2 \, 2\B$ & ?  &  & \\
$2^3$ & 2+2+4 & $(4\B)^2  \, (2\B)^3$ & 10 & 2 & pos \\
\\
$2^3$ & 2+4+4 & $4\B \, 4\A \, 2\A$ & 16 & 2 & semi\\
\\
$2^3$ & 4+4+4 & $(4\A)^3 \, (2\B)^3$ & ?  &  & \\
$2^3$ & 4+4+4 & $(4\A)^2 \, 4\B \, (2\A)^2 \, 2\B$ & ?  &  & \\
$2^3$ & 4+4+4 & $4\A \, (4\B)^2  \, 2\B \, (2\A)^2$ & ?  &  & \\
$2^3$ & 4+4+4 & $(4\B)^3 \, (2\B)^3$ & 15 & 2 & pos \\
\\
$3^2:2$ & 9 & $(3\A)^4$ & ? &  & \\
$3^2:2$ & 9 & $(3\A)^2 \, (3\C)^2$ & ? &  & \\
$3^2:2$ & 9 & $3\A \, (3\C)^3$ & 12 & 2 & pos \\
$3^2:2$ & 9 & $(3\C)^4$ & 9 & 1 & pos \\
\\
$S_4$ & 6 & $3\A \, 2\A$ & 13 & 2  & pos \\
$S_4$ & 6 & $3\A \, 2\B$ & 13 & 3 & pos \\
$S_4$ & 6 & $3\C \, 2\A$ & 9 & 2 & pos \\
$S_4$ & 6 & $3\C \, 2\B$ & 6 & 1 & pos \\
\\
$S_4$ & 3+6 & $4\A \, 3\A \, 2\A$ & 23 & 3 & pos \\
$S_4$ & 3+6 & $4\A \, 3\A \, 2\B$ & 25 & 3 & pos \\
$S_4$ & 3+6 & $4\A \, 3\C \, 2\B$ & 12 & 2 & pos \\
$S_4$ & 3+6 & $4\B \, 3\A \, 2\A$ & 13 & 2 & pos \\
$S_4$ & 3+6 & $4\B \, 3\A \, 2\B$ & 16 & 2 & pos \\
$S_4$ & 3+6 & $4\B \, 3\C \, 2\A$ & 9 & 1 & pos \\
$S_4$ & 3+6 & $4\B \, 3\C \, 2\B$ & 12 & 2 & pos \\
\\
$2^2\wr 2$ & $4+4+4$ & $(4\A)^3 \, (2\A)^2$ & ? & & \\
$2^2\wr 2$ & $4+4+4$ & $(4\A)^3 \, 2\A \, 2\B$ & ? & & \\
$2^2\wr 2$ & $4+4+4$ & $(4\A)^2 \, 4\B \, 2\B \, 2\A$ & ? & & \\
$2^2\wr 2$ & $4+4+4$ & $(4\A)^2 \, 4\B \, (2\B)^2$ & ? & & \\
$2^2\wr 2$ & $4+4+4$ & $4\A \, (4\B)^2 \, (2\A)^2$ & ? & & \\
$2^2\wr 2$ & $4+4+4$ & $4\A \, (4\B)^2 \, 2\A \, 2\B$ & ? & & \\
\\
$2^2\wr 2$ & $4+8+8$ & $(4\A)^4 \, 2\A \, (2\B)^4$ & ? & & \\
$2^2\wr 2$ & $4+8+8$ & $(4\A)^4 \, (2\B)^5$ & ? & & \\
\\
$2\wr 2^2$ & $4+4+8$ & $4\B \, (4\A)^3 \, 2\B$ & ? & &\\
$2\wr 2^2$ & $4+4+8$ & $4\B \, (4\A)^2 \, 4\B \, 2\B$ & ? & & \\
$2\wr 2^2$ & $4+4+8$ & $4\B \, 4\A \, (4\B)^2 \, 2\B$ & ? & & \\
\\
$2\wr 2^2$ & $8+8+8$ & $(4\A)^4 \, 4\B \, (2\A)^2 \, (2\B)^2$ & ? & & \\ 
$2\wr 2^2$ & $8+8+8$ & $4\B \, (4\A)^4 \, (2\A)^4$ & ? & & \\
$2\wr 2^2$ & $8+8+8$ & $(4\B)^2 \, (4\A)^3 \, (2\A)^3 \, 2\B$ & ? & & \\
$2\wr 2^2$ & $8+8+8$ & $(4\B)^3 \, (4\A)^2 \, (2\A)^2 \, (2\B)^2$ & ? & & \\
\\
$2.4:D_8$ & $8+8+8$ & $(4\A)^6 \, (2\A)^3 \, (2\B)^3$ & ? & & \\
$2.4:D_8$ & $8+8+8$ & $(4\A)^6 \, (2\A)^2 \, (2\B)^4$ & ? & & \\
$2.4:D_8$ & $8+8+8$ & $(4\A)^6 \, 2\A \, (2\B)^5$ & ? & & \\
$2.4:D_8$ & $8+8+8$ & $(4\A)^6 \, (2\B)^6$ & ? & & \\
$2.4:D_8$ & $8+8+8$ & $(4\A)^4 \, (4\B)^2 \, (2\B)^2 \, 2\A \, (2\B)^3$ & ? & & \\
$2.4:D_8$ & $8+8+8$ & $(4\A)^2 \, (4\B)^4 \, 2\A \, (2\B)^5$ & ? & & \\
$2.4:D_8$ & $8+8+8$ & $(4\B)^6 \, (2\A)^3 \, (2\B)^3$ & ? & & \\
\\
$S_3\wr 2$ & 6+6 & $4\A \, (3\A)^2$ & ?  &  & \\
$S_3\wr2$ & 6+6 & $4\B \, (3\A)^2$ & ?  &  & \\
\\
$4^2:S_3$ & 12 & $4\A \, 3\A$ & ?  &  & \\
$4^2:S_3$ & 12 & $4\A \, 3\C$ & 15 & 2  & pos \\
$4^2:S_3$ & 12 & $4\B \, 3\C$ & 15 & 2 & pos \\
\\
$2^2:S_4$ & 6+12 & $(4\A)^3 \, 3\A \, 2\A$ & ?  &  & \\
$2^2:S_4$ & 6+12 & $(4\A)^3 \, 3\C \, 2\A$ & ?  &  & \\
$2^2:S_4$ & 6+12 & $4\A \, (4\B)^2 \, 3\A \, 2\B$ & ?  &  & \\
$2^2:S_4$ & 6+12 & $4\A \, (4\B)^2 \, 3\C \, 2\B$ & 30 & 2 & pos \\
\\
$2^2:S_4$ & 12+12 & $(4\A)^3 \, 4\B \, 3\C$ & 60 & 3 & pos \\
$2^2:S_4$ & 12+12 & $4\A \, (4\B)^3 \, 3\A$ & 59 & 3 &  pos \\
$2^2:S_4$ & 12+12 & $4\A \, (4\B)^3 \, 3\C$ & 42 & 2 &  pos \\
\\
$2^4.2^3$ & $8+8+16$ & $(4\A)^5 \, (4\B)^2 \, (2\A)^2 \, (2\B)^4$ & ?  &  & \\ 
$2^4.2^3$ & $8+8+16$ & $4\B \, (4\A)^6 \, (2\B)^2 \, (2\A)^2 \, (2\B)^2$ & ?  &  & \\
\\
$PSL(2,7)$ & 21 & $4\A \, 3\A$ & ? &  & \\
$PSL(2,7)$ & 21 & $4\A \, 3\C$ & 57 & 3 & pos \\
$PSL(2,7)$ & 21 & $4\B \, 3\A$ & 49 & 2 & pos \\
$PSL(2,7)$ & 21 & $4\B \, 3\C$ & 21 & 1 & pos \\
\\
$2^5.2^3$ & $8+16+16$ & $(4\A)^7 \, 4\B \, (2\A)^2 \, (2\B)^4$ & ? &  & \\ 
$2^5.2^3$ & $8+16+16$ & $(4\A)^7 \, 4\B \, 2\A \, (2\B)^5$ & ? &  & \\
$2^5.2^3$ & $8+16+16$ & $(4\A)^4 \, (4\B)^2  \, (4\A)^2\, (2\B)^2 \, (2\A)^4$ & ? &  & \\
$2^5.2^3$ & $8+16+16$ & $(4\A)^4 \, (4\B)^4 \, (2\B)^2 \, (2\A)^4$ & ? &  & \\
$2^5.2^3$ & $8+16+16$ & $(4\A)^3  \, (4\B)^2 \, (4\A)^2\, 4\B \, 2\A \, (2\B)^5$ & ? &  & \\
$2^5.2^3$ & $8+16+16$ & $(4\A)^2 \, (4\B)^4 \, 4\A \, 4\B \, (2\A)^2 \, (2\B)^4$ & ? &  & \\
\\
$2^5.2^3$ & $16+16+16$ & $(4\A)^9 \, (2\A)^6$ & ? &  & \\
$2^5.2^3$ & $16+16+16$ & $(4\A)^7 \, (4\B)^2 \, (2\A)^6$ & ? &  & \\
$2^5.2^3$ & $16+16+16$ & $(4\A)^5 \, (4\B)^2 \, 4\A \, 4\B \, (2\B)^6$ & ? &  & \\
$2^5.2^3$ & $16+16+16$ & $(4\A)^4 \, 4\B \, 4\A \, (4\B)^3 \, (2\A)^6$ & ? &  & \\
$2^5.2^3$ & $16+16+16$ & $4\B \, (4\A)^3 \, (4\B)^5 \, (2\A)^6$ & ? &  & \\
\\
$2^4.2^4.2^2$ & $32+32+32$ & $(4\A)^3 \, (4\B)^3 \, (4\A)^6$ & ? &  & \\
\\
$A_4^2.D_8$ & $12+24$ & $(4\A)^2 \, (3\A)^2 \, 2\A$ & ? &  & \\
$A_4^2.D_8$ & $12+24$ & $4\A \, 4\B \, (3\A)^2 \, 2\A$ & ? &  & \\
\\
$3^4.D_8:2$ & $18+18+18$ & $4\A \, (3\A)^6$ & ? &  & \\
$3^4.D_8:2$ & $18+18+18$ & $4\B \, (3\A)^6$ & ? &  & \\
\hline
\caption{$3$-generated $4$-algebras}\label{tab:results}
\end{longtable}

\end{document}